\newtheorem{theorem}{Theorem}[section]
\newtheorem{lemma}[theorem]{Lemma}
\newtheorem{proposition}[theorem]{Proposition}
\newtheorem{corollary}[theorem]{Corollary}
\theoremstyle{definition}
\newtheorem{definition}[theorem]{Definition}
\newtheorem{question}[theorem]{Question}
\theoremstyle{remark}
\numberwithin{equation}{section}
\DeclareMathOperator{\re}{Re}
\DeclareMathOperator{\im}{Im}
\title[The number of self-intersections of a trigonometric curve]{A sharp bound on the number of self-intersections of a trigonometric curve}
\author{Sergei Kalmykov}
\address{School of mathematical sciences,   CMA-Shanghai, Shanghai Jiao Tong University, 800 Dongchuan RD, Shanghai 200240, China} 
\email{kalmykovsergei@sjtu.edu.cn} 
\author{Leonid V. Kovalev}
\address{215 Carnegie, Mathematics Department, Syracuse University, Syracuse, NY 13244, USA}
\email{lvkovale@syr.edu}
\subjclass[2020]{Primary 30C10; Secondary 26C05, 42A05} 
\keywords{Trigonometric curves, Laurent polynomials, self-intersections, intersection multiplicity, Whitney index}
\begin{document}
\begin{abstract} We obtain a sharp bound on the number of self-intersections of a closed planar curve with trigonometric parameterization. 
Moreover, we show that a generic curve of this form is normal in the sense of Whitney. 
\end{abstract}

\maketitle
\baselineskip6.6mm

\section{Introduction}

A nonzero Laurent polynomial with complex coefficients has the form 
\begin{equation}\label{Lpoly}
p(z) = \sum_{k=m}^n a_k z^k
\end{equation}
where $m, n\in \mathbb Z$, $m\le n$, $a_m\ne 0$, and $a_n\ne 0$. The set of all such polynomials  will be denoted $\mathcal P_{m, n}$. A planar \emph{trigonometric curve} is the image of the unit circle $\mathbb T = \{z\in \mathbb C\colon |z|=1\}$ under a Laurent polynomial, that is, the closed parametric curve $ \{t\mapsto p(e^{it})$, $0\le t\le 2\pi\}$. Such curves arise in many contexts, see~\cite{HongSchicho} for a brief overview. We are interested in the number of their self-intersections. By definition, a \textit{self-intersection of $p$ on $\mathbb T$} is a two-point subset $\{z_1, z_2\}\subset \mathbb T$ where $z_1\ne z_2$ and $p(z_1)=p(z_2)$. Note that if $p$ attains the same value at $s>2$ distinct points on $\mathbb T$, that counts as $s(s-1)/2$ self-intersections. Our main result is the following theorem. 

\begin{theorem}\label{self-intersection-thm} If $n \ge |m| \ge 1$, the number of self-intersections of the Laurent polynomial~\eqref{Lpoly} on $\mathbb T$ is at most 
\begin{equation}\label{upper-bound-thm}
\sigma(m, n) := (n-1)(n-m) - \gcd(n, m) + 1 
\end{equation}
with the following exceptions: (a) $p$ can be written as $q(z^j)$ for some Laurent polynomial $q$ and some integer $j\ne -1, 1$; (b) $n=-m$ and $|a_n| =  |a_m|$. 

The upper bound~\eqref{upper-bound-thm} is attained for each $(m, n)$ such that $n \ge |m| \ge 1$ and $n\ne m$.
\end{theorem}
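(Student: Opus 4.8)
\medskip
\noindent\emph{Sketch of the sharpness construction.}
Because the upper bound just stated is already available, for each $(m,n)$ with $n\ge|m|\ge1$ and $n\ne m$ it suffices to produce one $p\in\mathcal P_{m,n}$ that avoids the exceptional families (a) and (b), together with $\sigma(m,n)$ pairwise distinct self-intersections on $\mathbb T$; at this stage no transversality or Whitney normality of the example is needed. The plan is to take
\[
p(z)=\sum_{k=m}^{n}z^{k}=\frac{z^{n+1}-z^{m}}{z-1}
\]
when $\gcd(m,n)=1$, a perturbed $d$-th power substitution when $\gcd(m,n)=d\ge2$, and the ellipse $z^{-1}+2z$, which realizes $\sigma(-1,1)=0$, for the single degenerate pair $(m,n)=(-1,1)$, where $\sum_{k=-1}^{1}z^{k}$ is exactly the excluded case (b).

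For $\gcd(m,n)=1$ the main tool is the closed form $p(e^{it})=e^{i\mu t}\,g(t)$ with $\mu=\tfrac{m+n}{2}$, $\nu=\tfrac{n-m+1}{2}$ and $g(t)=\sin(\nu t)/\sin(t/2)$ real-valued. Thus $p(e^{it_1})=p(e^{it_2})$ with $g(t_i)\ne0$ forces $e^{i\mu(t_1-t_2)}=\pm1$ and $|g(t_1)|=|g(t_2)|$; moreover $g$ vanishes exactly at the $n-m$ points of $\mathbb T$ that are $(n-m+1)$-th roots of unity other than $1$, where $p$ already takes the value $0$, so the value $0$ by itself contributes $\binom{n-m}{2}$ self-intersections. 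Equivalently, in the symmetric coordinates $s=z_1+z_2$, $q=z_1z_2$ the equation $p(z_1)=p(z_2)$ becomes, after clearing denominators and factoring out $z_1-z_2$, a single polynomial identity between $s$ and $q$; on the locus describing unordered pairs $\{z_1,z_2\}\subset\mathbb T$ (namely $|q|=1$ and $s/\sqrt q\in[-2,2]$) one coordinate can be eliminated, leaving a univariate polynomial equation. The substance of the argument, and what I expect to be the main obstacle, is to show that for this particular $p$ this equation is \emph{saturated}: every one of its roots lies in the admissible region and yields a genuine self-intersection, so the bound $(n-1)(n-m)$ is attained. This should follow from the factorization of $p$ into cyclotomic-type factors together with the elementary shape of $g$; concretely, one must show that the solutions of $|g(t_1)|=|g(t_2)|$ with $g(t_i)\ne0$ number precisely $(n-1)(n-m)-\binom{n-m}{2}$.

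For $\gcd(m,n)=d\ge2$ the plan is $p(z)=r(z^{d})+\varepsilon z^{m+1}$ with $\varepsilon\ne0$ small, where $r\in\mathcal P_{m/d,\,n/d}$ is a suitable extremal example for the coprime pair $(m/d,n/d)$ (the ellipse above when $(m/d,n/d)=(-1,1)$); since $\gcd(m+1,d)=1$, the exponent $m+1$ is not a multiple of $d$ and $p$ avoids both exceptional families. Its self-intersections fall into two families. First, each of the $\sigma(m/d,n/d)$ coincidences of $r$ lifts to $d^{2}$ coincidences of $z\mapsto r(z^{d})$: among the $2d$ points lying over the two relevant values of $r$ there are $\binom{2d}{2}$ unordered pairs, of which $2\binom{d}{2}$ satisfy $z_1^{d}=z_2^{d}$, leaving $d^{2}$; these persist for small $\varepsilon$ and contribute $d^{2}\sigma(m/d,n/d)=(n-d)(n-m)$. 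Second, along each of the $d-1$ circles $\{(z_1,\zeta z_1):z_1\in\mathbb T\}$ with $\zeta^{d}=1$, $\zeta\ne1$ — on which $r(z_1^{d})$ and $r(z_2^{d})$ agree identically — one has $p(z_1)-p(z_2)=\varepsilon z_1^{m+1}(1-\zeta^{m+1})\ne0$, so coincidences survive only at distance $O(\varepsilon)$ off the circle; a first-order expansion of $p(z_1)=p(\zeta z_1 e^{i\phi})=0$ identifies the admissible points, to leading order, with the zeros on $\mathbb T$ of a fixed real trigonometric polynomial of degree $n-m-1$, hence at most $2(n-m-1)$ of them, and — for the right choice of $r$ and of $\arg\varepsilon$ — exactly $2(n-m-1)$. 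After identifying $\zeta$ with $\zeta^{-1}$ this family contributes $\tfrac12(d-1)\cdot2(n-m-1)=(d-1)(n-m-1)$, so the grand total is
\[
(n-d)(n-m)+(d-1)(n-m-1)=(n-1)(n-m)-d+1=\sigma(m,n).
\]
Verifying that the two families stay disjoint and internally distinct for generic small $\varepsilon$ is routine; as in the coprime case, the genuinely delicate input is the exact count of the unit-circle zeros of the degree-$(n-m-1)$ real trigonometric polynomial attached to each $\zeta$.
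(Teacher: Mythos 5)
Your proposal addresses only half of the theorem: you take the upper bound as ``already available'' and sketch only the sharpness. In the paper the upper bound is the main body of the proof and it is not something you may assume: it is obtained by translating self-intersections into common zeros of the polynomials $g$ and $g^*$ of Lemma~\ref{self-intersections}, applying Bezout's theorem, and then subtracting the intersection multiplicity that $G=0$ and $G^*=0$ accumulate along the line $z=0$ and at the points $(0,1,0)$ (and, for $m<0$, $(1,0,0)$) at infinity, where the count of $\gcd(n,m)-1$ common tangents comes from the gcd structure of Chebyshev polynomials (Lemmas~\ref{U-gcd} and~\ref{lemma-with-tangents}); the case $m=-n$ with $|a_n|\ne|a_m|$ is then reduced to $m\ge 1-n$ by the invertible $\mathbb R$-linear substitution $w\mapsto w-\frac{a_{-n}}{\overline{a_n}}\overline w$. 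None of this appears in your write-up, so even if your sharpness sketch were complete, the statement as a whole would not be proved.

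For the sharpness itself, your route is genuinely different from the paper's, but it has two unfilled holes that you yourself flag as the crux: (i) in the coprime case you must show that for $p(z)=\sum_{k=m}^n z^k$ the pair-counting equation is ``saturated,'' i.e.\ that the solutions of $|g(t_1)|=|g(t_2)|$ off the zero set number exactly $(n-1)(n-m)-\binom{n-m}{2}$, and (ii) in the case $d\ge 2$ you must show the real trigonometric polynomial of degree $n-m-1$ attached to each $d$-th root of unity has exactly $2(n-m-1)$ zeros on $\mathbb T$. These are asserted as ``should follow'' and ``delicate input,'' not proved, and they are precisely where the difficulty lives. In addition, your claim that the $d^2\sigma(m/d,n/d)$ lifted coincidences ``persist for small $\varepsilon$'' requires transversality (or at least pairwise-transversal branches at the multiple point over $0$, which for your candidate $r$ is a point of multiplicity $n/d-m/d$), contradicting your opening remark that no transversality is needed; without it a perturbation can destroy coincidences. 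The paper sidesteps all of these counts with a single construction $p(z)=z^n+\delta z^{m+1}+\epsilon z^m$: the associated $g(t,z)$ factors so that for each $t\in(-1,1)$ one has a closed curve $\gamma_t(z)=U_{n-1}(t)z^{n-m}+\delta U_m(t)z+\epsilon U_{m-1}(t)$, and zeros of $g$ are bounded below by the total variation of the winding number $N(\gamma_t)$, which is computed from the zero pattern of $U_{n-1}$ and $U_{m-1}$ ($N=n-m$ generically, $N=0$ at the $n-d$ zeros of $U_{n-1}$ that are not zeros of $U_{m-1}$, $N=1$ at the $d-1$ common zeros), yielding exactly $2\sigma(m,n)$ zeros via Corollary~\ref{corollary-winding-zeros}. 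If you want to salvage your approach, you must supply proofs of the two counting claims and a stability argument for the lifted crossings; as it stands, the proposal is an outline with the decisive steps missing.
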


Since $\overline{p}\in \mathcal P_{-n, -m}$ has the same number of self-intersections as $p$, there is no loss of generality in assuming $|m|\le n$ in Theorem~\ref{upper-bound-thm}. Also, the case of $p\in \mathcal P_{0, n}$ reduces to $p\in \mathcal P_{1, n}$ by disregarding the constant term. 

Theorem~\ref{self-intersection-thm} improves the upper bound in~\cite[Theorem 2.1]{PAMS} which was only sharp in the case $m<0$ and $\gcd(m, n)=1$. Families of curves realizing the upper bound in this case were recently considered by Fomin and Shustin  in~\cite[Proposition 7.15]{FominShustin}. The case $m=-n$ was addressed by Ishikawa in~\cite[Lemma 1.3]{Ishikawa}. The case of $p$ being an algebraic polynomial of degree $n$ was studied by Quine in~\cite{Quine73}. Quine's bound $(n-1)^2$ follows from Theorem~\ref{self-intersection-thm} because after disregarding the constant term in $p$ we have $p\in \mathcal P_{m, n}$ with $m\ge 1$, and 
$\sigma(m, n)\le (n-1)^2$. Beyond the number of self-intersections, the algebraic and geometric properties of the sets $p(\mathbb T)$ with $p\in \mathcal P_{m, n}$ have been studied in~\cite{Fernando}, \cite{HongSchicho}, and~\cite{KovalevYang}. 

The exceptional case (a) of Theorem~\ref{self-intersection-thm} consists precisely of the polynomials that trace the same curve multiple times. Case (b) includes the polynomials that trace a non-closed curve back and forth, for example $z+z^{-1}$. But it also includes some polynomials like $p(z)=(z+2-z^{-1})^4$ where the set $p(\mathbb T)$ is a simple closed curve. A more complicated example of this type is given in~\cite[Example 8]{HongSchicho}. Theorem~2.1 in~\cite{HongSchicho} relates the back-and-forth behavior to the existence of a polynomial parameterization of $p(\mathbb T)$ that is $1-1$ with finitely many exceptions. 

When $m<n$, the exceptional cases of Theorem~\ref{self-intersection-thm} form a nowhere dense closed subset of $\mathcal P_{m, n}$, under the standard topology of a finite-dimensional vector space. Thus, the upper bound on self-intersections applies to \emph{generic} polynomials, those forming an open dense subset of $\mathcal P_{m, n}$.    
In %\S
Section~\ref{sec:rotation} we show (Theorem~\ref{thm-normal-dense}) that for a generic polynomial $p\in \mathcal P_{m, n}$ the parametric curve $t\mapsto p(e^{it})$ is \emph{normal} in the sense of Whitney~\cite{Whitney1937}, which makes it possible to assign a sign to each self-intersection and relate the signed count to the rotation number (Whitney index) of the curve. 

\section{Preliminaries}\label{sec-prelim}

For $n\in \mathbb Z$ the Chebyshev polynomial $U_{n-1}$ (of second kind) is defined by the identity $U_{n-1}(\cos \theta) = \sin(n\theta)/\sin \theta$.  
The degree of $U_{n-1}$ is $|n|-1$ and the supremum of $|U_{n-1}|$ on $[-1, 1]$ is equal to $|n|$. 

\begin{lemma}\label{U-gcd} \cite[Theorem 4]{RayesTrevisanWang} The greatest common divisor of the polynomials $U_{n-1}$ and $U_{m-1}$ is $U_{d-1}$ where $d=\gcd(n, m)$. Therefore, $U_{n-1}$ and $U_{m-1}$ have exactly $d-1$ common zeros.
\end{lemma}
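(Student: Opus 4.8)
The plan is to argue directly from the explicit roots of the Chebyshev polynomials, which are completely transparent. First I would reduce to the case $n, m \ge 1$: since $U_{-k-1}(x) = -U_{k-1}(x)$ and $\gcd(n,m)=\gcd(|n|,|m|)$, replacing a negative index by its absolute value changes each $U$ only by a sign, affecting neither the set of common zeros nor the greatest common divisor (which is defined up to a nonzero scalar anyway). With $n\ge 1$ fixed, the identity $U_{n-1}(\cos\theta)=\sin(n\theta)/\sin\theta$ shows that $U_{n-1}$ vanishes at $x=\cos\theta$ exactly when $\sin(n\theta)=0$ but $\sin\theta\ne 0$, that is, at the $n-1$ points $\cos(k\pi/n)$ for $k=1,\dots,n-1$. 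Because cosine is strictly monotone on $[0,\pi]$ and each $k\pi/n$ lies in $(0,\pi)$, these values are pairwise distinct; as $\deg U_{n-1}=n-1$, they are all of the roots and each is simple. Hence $U_{n-1}(x)=2^{\,n-1}\prod_{k=1}^{n-1}\bigl(x-\cos(k\pi/n)\bigr)$ splits into distinct linear factors.

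The key step is to pin down the common zeros of $U_{n-1}$ and $U_{m-1}$. A point $x$ is a common zero precisely when $x=\cos(k\pi/n)=\cos(j\pi/m)$ for some $1\le k\le n-1$ and $1\le j\le m-1$. Since both $k\pi/n$ and $j\pi/m$ lie in the interval $(0,\pi)$ on which cosine is injective, this forces $k/n=j/m$. Writing $d=\gcd(n,m)$, $n=dn'$, $m=dm'$ with $\gcd(n',m')=1$, the relation $km'=jn'$ together with coprimality of $n'$ and $m'$ yields $k=n't$ and $j=m't$ for a common integer $t$, and the ranges $1\le k\le n-1$, $1\le j\le m-1$ restrict $t$ to $1\le t\le d-1$. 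Therefore the common zeros are exactly $\cos(n't\pi/n)=\cos(t\pi/d)$ for $t=1,\dots,d-1$, which are precisely the $d-1$ zeros of $U_{d-1}$.

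Finally I would assemble the greatest common divisor. Because every $U_{k-1}$ is a product of distinct linear factors, the monic gcd of $U_{n-1}$ and $U_{m-1}$ is the product of the linear factors over their common zeros, namely $\prod_{t=1}^{d-1}\bigl(x-\cos(t\pi/d)\bigr)=2^{-(d-1)}U_{d-1}(x)$. Up to the scalar normalization under which gcd is only determined, this is the asserted equality $\gcd(U_{n-1},U_{m-1})=U_{d-1}$, and the final claim of exactly $d-1$ common zeros follows from $\deg U_{d-1}=d-1$ and the simplicity of its roots.

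The argument is elementary, so there is no deep obstacle; the two points that must be handled with care are, first, the injectivity step that upgrades the equation $\cos(k\pi/n)=\cos(j\pi/m)$ to the arithmetic identity $k/n=j/m$, and second, the use of \emph{simplicity} of the roots — it is this squarefreeness that guarantees that having the same common zeros forces the gcds to agree (up to scalar) rather than merely to share the same radical. An alternative route would mimic the Euclidean algorithm on the indices via the reduction identity $U_{n-1}-U_{n-2m-1}=2T_{n-m}U_{m-1}$, so that $U_{m-1}$ divides $U_{n-1}-U_{n-2m-1}$; this would recover $\gcd(n,m)$ through repeated index reduction, but it requires more bookkeeping to reach the clean residue step, so I would present the root-based proof as the primary one.
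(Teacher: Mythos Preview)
Your proof is correct. The paper does not actually supply its own proof of this lemma; it simply cites Theorem~4 of Rayes--Trevisan--Wang, so there is no in-text argument to compare against directly.

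For context, the cited reference proceeds along the Euclidean-algorithm lines you sketch as an alternative at the end: one uses identities such as $U_{n-1}-U_{n-2m-1}=2T_{n-m}U_{m-1}$ to show that $\gcd(U_{n-1},U_{m-1})=\gcd(U_{m-1},U_{n-2m-1})$, mirroring the reduction $\gcd(n,m)=\gcd(m,n-2m)$ on the indices. Your primary root-based argument is genuinely different and arguably cleaner for the present purpose: it is entirely self-contained, it makes the count of $d-1$ common zeros completely explicit via the arithmetic of the fractions $k/n$ and $j/m$, and it isolates precisely the two ingredients that matter (injectivity of cosine on $(0,\pi)$ and squarefreeness of each $U_{k-1}$). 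The Euclidean route has the compensating virtue of paralleling the integer $\gcd$ and of extending to situations where roots are less accessible, but for this lemma your approach is both shorter and more transparent.
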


\begin{lemma}\label{g-lemma} \cite[Lemma 2.3]{PAMS} Consider a Laurent polynomial~\eqref{Lpoly} with $-n<m\le n$, $m\ne 0$, $a_n\ne 0$, and $a_m\ne 0$. Let 
\begin{equation}\label{g-note}
g(t, z) = \sum_{k=m}^{n} a_k U_{k-1}(t) z^{k-m},
\end{equation}
\begin{equation}\label{gs-note}
g^*(t, z) = z^{n-m} \overline{g(\bar t, 1/\bar z)} = \sum_{k=m}^{n} \overline{a_k} U_{k-1}(t) z^{n-k}.
\end{equation}
Then, with $t=\cos \theta$, we have
\begin{equation}\label{g-notation}
g(t, z) = z^{-m} \frac{p(e^{i\theta} z) - p(e^{-i\theta} z)}{e^{i\theta} - e^{-i\theta}}.
\end{equation}
Also, $g$ is a polynomial in $t, z$ of total degree $2n-m-1$, and $g^*$ is a polynomial in $t, z$ of total degree 
$n - m + |m| - 1$. 
\end{lemma}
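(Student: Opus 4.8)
The plan is to verify the three assertions—the identity~\eqref{g-notation}, the closed form for $g^*$, and the two total-degree formulas—by direct computation, using only the defining relation $U_{k-1}(\cos\theta) = \sin(k\theta)/\sin\theta$ together with the fact that $U_j$ has degree $j$ and leading coefficient $2^j$ for $j\ge 0$. I would open with a remark that this identity continues to hold for negative indices, since $\sin$ is odd: for $l>0$ one has $U_{-l-1}(\cos\theta)=\sin(-l\theta)/\sin\theta=-U_{l-1}(\cos\theta)$, so $U_{-l-1}=-U_{l-1}$ is a polynomial of degree $l-1$ with leading coefficient $-2^{l-1}$. This secures the bookkeeping fact $\deg U_{k-1}=|k|-1$ for every nonzero integer $k$, which the degree computations rely on.

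To establish~\eqref{g-notation}, I would write $p(e^{\pm i\theta}z)=\sum_{k=m}^{n} a_k e^{\pm ik\theta} z^k$ and subtract, obtaining $p(e^{i\theta}z)-p(e^{-i\theta}z)=\sum_{k=m}^{n} a_k (2i\sin k\theta)\,z^k$, while the denominator is $e^{i\theta}-e^{-i\theta}=2i\sin\theta$. Dividing term by term and applying $\sin(k\theta)/\sin\theta=U_{k-1}(\cos\theta)$ gives $\sum_{k=m}^{n} a_k U_{k-1}(\cos\theta)\,z^k$; multiplying by $z^{-m}$ and setting $t=\cos\theta$ reproduces $g(t,z)$ verbatim. (Here $k-m\ge 0$, so $g$ is manifestly a polynomial in $z$ as well as in $t$.) For the closed form of $g^*$, I would use that each $U_{k-1}$ has real coefficients, whence $\overline{U_{k-1}(\bar t)}=U_{k-1}(t)$; conjugating $g(\bar t, 1/\bar z)=\sum_k a_k U_{k-1}(\bar t)\,\bar z^{-(k-m)}$ then yields $\overline{g(\bar t,1/\bar z)}=\sum_{k=m}^{n}\overline{a_k}U_{k-1}(t)\,z^{m-k}$, and multiplying by $z^{n-m}$ shifts the exponent to $z^{n-k}$, giving the asserted expansion.

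For the degrees I would compute the total degree of the top monomial of each term and locate the maximum. In $g$ the term indexed by $k$ has top total degree $(|k|-1)+(k-m)$, which equals $2k-m-1$ for $k\ge 0$ (increasing in $k$) and the constant $-m-1$ for $k<0$; since $a_n\ne 0$ and $n\ge 1$, the maximum over $m\le k\le n$ is attained at $k=n$, giving $2n-m-1$. For $g^*$ the term $k$ has top total degree $(|k|-1)+(n-k)$, equal to $n-1$ for $k\ge 0$ and to $n-1-2k$ for $k<0$, so a case split on the sign of $m$ is needed: if $m>0$ every term contributes degree $n-1=n-m+|m|-1$, while if $m<0$ the maximum is at the most negative index $k=m$, giving $n-1-2m=n-m+|m|-1$ (using $a_m\ne 0$).

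The computation is otherwise routine, and the one point requiring genuine care is upgrading these degree estimates from inequalities to equalities by ruling out cancellation. For this I would observe that the extremal monomial in each case carries the highest power of $z$ present—namely $z^{n-m}$, realized in $g$ only by $k=n$ (its coefficient being $a_n 2^{n-1}t^{n-1}\ne 0$) and in $g^*$ only by $k=m$ (coefficient $\overline{a_m}\,(\pm 2^{|m|-1})t^{|m|-1}\ne 0$). Since a single index produces that extremal power of $z$, no cancellation can occur, and the total degrees are exactly $2n-m-1$ and $n-m+|m|-1$ as claimed.
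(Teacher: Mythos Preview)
Your argument is correct. The identity~\eqref{g-notation} follows immediately from $\sin(k\theta)/\sin\theta=U_{k-1}(\cos\theta)$ exactly as you write, the formula for $g^*$ is a routine conjugation using that Chebyshev polynomials have real coefficients, and your degree computations---including the observation that $U_{-l-1}=-U_{l-1}$ has degree $l-1$ and the no-cancellation step isolating the unique $z^{n-m}$ term---are accurate and complete.

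Note that the paper does not supply its own proof of this lemma: it is quoted verbatim from \cite[Lemma~2.3]{PAMS} and stated in the preliminaries without argument. Your direct verification is the natural way to establish it and is essentially what any proof must do, so there is nothing substantive to compare.
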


\begin{lemma}\label{self-intersections} \cite[Lemma 2.4]{PAMS} Let $p$, $g$, $g^*$ be as in Lemma~\ref{g-lemma}. Given a self-intersection of $p_{|\mathbb T}$, write it in the form $\{ e^{i\theta}z, e^{-i\theta}z \}$ where $z\in \mathbb T$ and $e^{i\theta}\in \mathbb T\setminus \{-1, 1\}$. Let $t=\cos \theta$. Then $g(t, z)=g^*(t, z) = g(-t, -z) = g^*(-t, -z)=0$, i.e., the algebraic curves $g=0$ and $g^*=0$ intersect at the points $(t, z)$ and $(-t, -z)$. Different self-intersections correspond to different pairs $\{(t, z), (-t, -z)\}$.  
\end{lemma}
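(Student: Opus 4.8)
The plan is to pass back and forth between the geometric description of a self-intersection as an unordered pair of points on $\mathbb T$ and its algebraic encoding by a point $(t,z)$, and then to read off the four vanishing statements directly from the identities \eqref{g-notation} and \eqref{gs-note}.

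First I would set up the parametrization. Given a self-intersection $\{w_1,w_2\}$ with $w_1\neq w_2$ on $\mathbb T$ and $p(w_1)=p(w_2)$, I fix a square root $z$ of $w_1w_2$; then $z\in\mathbb T$, and the two numbers $w_1/z,\,w_2/z$ lie on $\mathbb T$ and satisfy $(w_1/z)(w_2/z)=1$, so they form a conjugate pair $e^{i\theta},e^{-i\theta}$. This realizes the self-intersection as $\{e^{i\theta}z,e^{-i\theta}z\}$. Because $w_1\neq w_2$ forces $e^{i\theta}\neq e^{-i\theta}$, we have $e^{i\theta}\neq\pm1$, hence $\sin\theta\neq0$ and $t=\cos\theta\in(-1,1)$.

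Next I would extract the vanishing of $g$. Substituting this $\theta$ and $z$ into \eqref{g-notation}, the numerator $p(e^{i\theta}z)-p(e^{-i\theta}z)=p(w_1)-p(w_2)$ is zero while the denominator $e^{i\theta}-e^{-i\theta}=2i\sin\theta$ is nonzero, so $g(t,z)=0$. To obtain $g(-t,-z)=0$ I would observe that replacing $\theta$ by $\theta+\pi$ and $z$ by $-z$ leaves the set $\{e^{i\theta}z,e^{-i\theta}z\}$ unchanged and sends $(t,z)$ to $(-t,-z)$, while keeping $e^{i(\theta+\pi)}\neq\pm1$; applying \eqref{g-notation} again gives $g(-t,-z)=0$. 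For $g^*$ I would use that $t$ is real and $z\in\mathbb T$, so $\bar t=t$ and $1/\bar z=z$; then the first equality in \eqref{gs-note} reads $g^*(t,z)=z^{n-m}\overline{g(t,z)}$, whence $g(t,z)=0$ forces $g^*(t,z)=0$, and likewise $g^*(-t,-z)=(-z)^{n-m}\overline{g(-t,-z)}=0$. This shows that the curves $g=0$ and $g^*=0$ meet at both $(t,z)$ and $(-t,-z)$.

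Finally I would establish the correspondence. From a point $(t,z)$ with $t\in(-1,1)$ and $z\in\mathbb T$ one recovers $\theta$ up to sign, hence the unordered conjugate pair $\{e^{i\theta},e^{-i\theta}\}$, and therefore the self-intersection $\{e^{i\theta}z,e^{-i\theta}z\}$; so a self-intersection is determined by any one of its representing points. The only freedom in the construction is the choice of square root of $w_1w_2$: the two choices $z$ and $-z$ yield exactly the points $(t,z)$ and $(-t,-z)$, which are distinct since $z\neq0$. Thus each self-intersection is encoded by precisely the unordered pair $\{(t,z),(-t,-z)\}$, and distinct self-intersections give distinct pairs. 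The argument is essentially a dictionary between geometry and algebra, so the only real care is the bookkeeping: confirming that the twofold freedom in choosing $z$ is exactly the $(t,z)\leftrightarrow(-t,-z)$ symmetry and that $e^{i\theta}\neq\pm1$ throughout, so that both the denominator in \eqref{g-notation} and the recovery of $\theta$ from $t$ are legitimate. I expect this to be the main, if minor, obstacle; the vanishing statements themselves are immediate once the substitution is in place.
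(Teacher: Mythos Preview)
Your proposal is correct. The paper does not supply its own proof of this lemma; it is quoted verbatim from \cite[Lemma~2.4]{PAMS}, so there is nothing to compare against here. Your argument is the natural one: the identity~\eqref{g-notation} immediately forces $g(t,z)=0$ once $p(e^{i\theta}z)=p(e^{-i\theta}z)$ and $\sin\theta\ne0$, the relation $g^*(t,z)=z^{n-m}\overline{g(t,z)}$ for real $t$ and $|z|=1$ transfers the vanishing to $g^*$, and the symmetry $(\theta,z)\mapsto(\theta+\pi,-z)$ accounts for the companion point $(-t,-z)$ and for the two-to-one nature of the encoding.
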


The following bound on intersection multiplicity was proved by Byd\v{z}ovsk\'{y}~\cite{Bydzovsky} and can be found in more recent publications~\cite[Theorem 1.1]{BodaSchenzel}, \cite[Remark 5.4]{Khadam}, and~\cite[Theorem 1]{BosakovaChalmoviansky}. 

\begin{lemma}\label{lemma-with-tangents} Suppose that two polynomials $f$ and $g$ in $\mathbb C[x, y]$ have no common factors and vanish at a point $P\in \mathbb C^2$ with multiplicities $a$ and $b$. If the algebraic curves $f=0$ and $g=0$ have $c$ common tangents at $P$, counted with multiplicities, then their intersection multiplicity at $P$ is at least $ab+c$.     
\end{lemma}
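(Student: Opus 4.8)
The plan is to treat this as a purely local statement and prove it by a single blow-up of the plane at $P$, reducing the desired inequality to a count of intersection numbers on the exceptional divisor. After translating I may assume $P=(0,0)$, and I write the leading forms (tangent cones) $f_a$ and $g_b$, homogeneous of degrees $a$ and $b$. Interpreting the common tangents counted with multiplicity as $c=\deg\gcd(f_a,g_b)=\sum_\ell\min(\mu_\ell,\nu_\ell)$, where $\ell$ ranges over the common tangent lines and $\mu_\ell,\nu_\ell$ are the multiplicities of $\ell$ in $f_a,g_b$, I let $\pi$ be the blow-up of $\mathbb C^2$ at $P$, let $E\cong\mathbb P^1$ be the exceptional divisor, and let $\tilde f,\tilde g$ be the strict transforms. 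I would invoke the classical transformation formula
\[
I_P(f,g)=ab+\sum_{Q\in E} I_Q(\tilde f,\tilde g),
\]
valid since $f,g$ have no common component, so that the whole problem reduces to showing $\sum_{Q} I_Q(\tilde f,\tilde g)\ge c$.

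The next step is to identify the relevant points of $E$. Points of $E$ correspond to tangent directions at $P$: a tangent line $\ell$ gives a point $Q_\ell$, the strict transform $\tilde f$ meets $E$ there with $I_{Q_\ell}(\tilde f,E)=\mu_\ell$, and likewise $I_{Q_\ell}(\tilde g,E)=\nu_\ell$. Because $f,g$ share no factor, $\tilde f$ and $\tilde g$ share no branch, and no branch of either equals $E$. Thus it suffices to prove, for each common tangent $\ell$, the local estimate $I_{Q_\ell}(\tilde f,\tilde g)\ge\min(\mu_\ell,\nu_\ell)$; summing over $\ell$ then gives $\sum_Q I_Q(\tilde f,\tilde g)\ge\sum_\ell\min(\mu_\ell,\nu_\ell)=c$, which completes the argument.

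For the local estimate, write $Q=Q_\ell$ and decompose $\tilde f,\tilde g$ into irreducible germs at $Q$. Additivity of intersection multiplicity gives $I_Q(\tilde f,\tilde g)=\sum_{A,B}I_Q(A,B)$, $\mu_\ell=\sum_A I_Q(A,E)$, and $\nu_\ell=\sum_B I_Q(B,E)$, where $A$ ranges over the branches of $\tilde f$ through $Q$ and $B$ over those of $\tilde g$. The key input is an ultrametric inequality with respect to the smooth reference curve $E$: for distinct branches $A,B$, neither equal to $E$,
\[
I_Q(A,B)\ \ge\ \min\bigl(I_Q(A,E),\,I_Q(B,E)\bigr).
\]
Granting this, and using the elementary bound $\sum_{A,B}\min(x_A,y_B)\ge\min(\sum_A x_A,\sum_B y_B)$ for nonnegative reals (an easy induction on the number of terms), I get $I_Q(\tilde f,\tilde g)\ge\sum_{A,B}\min(I_Q(A,E),I_Q(B,E))\ge\min(\mu_\ell,\nu_\ell)$, as needed.

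The main work is the ultrametric inequality, which I would prove with Puiseux parametrizations adapted to $E=\{u=0\}$. Reparametrizing, I write $A$ primitively as $t\mapsto(t^{p},\phi(t))$ with $\phi\in\mathbb C[[t]]$, $\phi(0)=0$, so $p=I_Q(A,E)$, and $B$ as $s\mapsto(s^{q},\psi(s))$ with $q=I_Q(B,E)$. A defining equation of $B$ is $\prod_{l=0}^{q-1}\bigl(v-\psi(\zeta_q^{\,l}u^{1/q})\bigr)$, with $\zeta_q$ a primitive $q$-th root of unity, so that
\[
I_Q(A,B)=\operatorname{ord}_t\prod_{l=0}^{q-1}\bigl(\phi(t)-\psi(\zeta_q^{\,l}t^{\,p/q})\bigr).
\]
Since $\operatorname{ord}_t\phi\ge 1$ and $\operatorname{ord}_t\psi(\zeta_q^{\,l}t^{\,p/q})\ge p/q$, the valuation inequality shows each of the $q$ factors has $t$-order at least $\min(1,p/q)\ge\min(p,q)/q$, whence the product has order at least $\min(p,q)$, giving the claim. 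The delicate points I expect to have to handle carefully are the justification of the primitive parametrization with $u$-coordinate exactly $t^p$, and the fact that the individual factors carry fractional exponents whose sum nonetheless lands on the integer $I_Q(A,B)$; this conjugacy bookkeeping is where the sharpness of the bound (equality when the two tangent multiplicities are balanced) is visible, and it is the part of the proof most likely to require care.
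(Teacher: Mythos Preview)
The paper does not prove this lemma: it is quoted with references to Byd\v{z}ovsk\'{y} and to \cite{BodaSchenzel,Khadam,BosakovaChalmoviansky}, and is used as a black box. So there is no ``paper's own proof'' to match; any correct argument suffices.

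Your blow-up approach is correct and is essentially the standard modern way to see the statement. The transformation law $I_P(f,g)=ab+\sum_{Q\in E}I_Q(\tilde f,\tilde g)$ is Noether's formula, the identification $I_{Q_\ell}(\tilde f,E)=\mu_\ell$ follows from $\tilde f|_E(v)=f_a(1,v)$ in the affine chart, and your Puiseux computation for the ultrametric bound $I_Q(A,B)\ge\min(I_Q(A,E),I_Q(B,E))$ is valid: after normalizing $u(t)=t^p$ (possible since $u(t)/t^p$ is a unit and $p$-th roots of units exist in $\mathbb C[[t]]$), each factor $\phi(t)-\psi(\zeta_q^{\,l}t^{p/q})$ indeed has $t$-order at least $\min(1,p/q)=\min(p,q)/q$, and multiplying $q$ such factors gives the claim. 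The combinatorial step $\sum_{A,B}\min(x_A,y_B)\ge\min\bigl(\sum_A x_A,\sum_B y_B\bigr)$ is also fine; it follows by first proving $\sum_B\min(x,y_B)\ge\min(x,\sum_B y_B)$ (split on whether some $y_B\ge x$) and then iterating. Two minor points worth making explicit in a final write-up: (i) one should cover the tangent direction that lies at infinity in the chosen affine chart of the blow-up by passing to the second chart, and (ii) the reparametrization preserving primitivity deserves a sentence. Neither affects the validity of the argument.
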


\section{Proof of Theorem~\ref{self-intersection-thm}: upper bound}

First consider the case $1\le |m| < n$. Let $d=\gcd(m, n)$. 
Let $g$ and $g^*$ be as in~\eqref{g-note} and~\eqref{gs-note}. By virtue of Lemma~\ref{self-intersections}, our goal is to show that $g$ and $g^*$ have at most $2\sigma(m, n)$ common zeros $(t, z)$ with $|z|=1$. The upper bound provided by Bezout's theorem (which applies since $g$ and $g^*$ have no common factors~\cite[p. 550]{PAMS}) is 
\[
\deg g \deg g^* = (2n-m-1)(n-m+|m|-1)
\] 
which is generally greater than $2\sigma(m, n)$. However, we will see that the algebraic curves $g=0$ and $g^*=0$ have intersections along the line $z=0$ and also along the line at infinity. 

Indeed, since $g(t, 0) = a_mU_{m-1}(t)$ and $g^*(t, 0)=\overline{a_n}U_{n-1}(t)$, Lemma~\ref{U-gcd} shows that $g$ and $g^*$ have $d-1$ common zeros along the line $z=0$. 

To study the intersections along the line at infinity in $\mathbb CP^2$, we introduce the corresponding homogeneous polynomials: 
\begin{equation}\label{G-homog}
G(t, z, w) = 
\sum_{k=m}^n a_k U_{k-1}(t/w) z^{k-m} w^{2n-k-1}
\end{equation}
and 
\begin{equation}\label{G*-homog}
G^*(t, z, w) = \sum_{k=m}^n \overline{a_k} U_{k-1}(t/w) z^{n-k} w^{k-m+|m|-1}.
\end{equation}

The polynomial $G$ has total degree $2n-m-1$ while its degree with respect to $z$ is $n-m$. Hence $G$ has a zero of order $n-1$ at the point $(t, z, w) = (0, 1, 0)$. More precisely, the polynomial $H(t, w):=G(t, 1, w)$ can be written as the sum of homogeneous polynomials $H_j$ as follows: 
\begin{equation}\label{eq-G-mboth}
H(t, w) = \sum_{j=n-1}^{2n-m-1} H_j (t, w) = 
a_n U_{n-1}(t/w) w^{n-1} + \sum_{j=n}^{2n-m-1} H_j (t, w)
\end{equation}
where each $H_j$ is homogeneous of degree $j$. 

Similarly, $G^*$ has total degree $n-m+|m|-1$ while its degree with respect to $z$ is $n-m$. Expanding
$H^*(t, w):=G^*(t, 1, w)$  into a sum of homogenous terms, we find that $G^*$ has a zero of order $|m|-1$ at $(0, 1, 0)$: 
\begin{equation}\label{eq-G*-mboth}
H^*(t, w) = \sum_{j=|m|-1}^{n - m + |m|-1} H_j^* (t, w) = 
\overline{a_m} U_{m-1}(t/w) w^{|m|-1} + 
\sum_{j=|m|}^{n - m + |m|-1} H_j^* (t, w). 
\end{equation}
The comparison of~\eqref{eq-G-mboth}  and~\eqref{eq-G*-mboth} shows that the algebraic curves $H=0$ and $H^*=0$ have a common tangent line $t/w = r$ whenever $U_{n-1}(r)=U_{m-1}(r)=0$. By Lemma~\ref{U-gcd} there are $(d-1)$ common tangents, which contribute to the intersection multiplicity according to Lemma~\ref{lemma-with-tangents}: 
\begin{equation}\label{eq-I-bound-1}
I_{(0, 1, 0)}(G, G^*) \ge (n-1)(|m|-1) + d-1.    
\end{equation}

If $m>0$, we can already conclude that the number of points $(t, z)$ where $|z|=1$ and both $g$ and $g^*$ vanish is at most  
\[
\begin{split}
& \deg G \deg G^* - I_{(0, 1, 0)}(G, G^*) - (d-1) \\ 
& \le  (2n-m-1)(n-1) - (n-1)(m-1) - 2(d - 1) \\ & = 
2\sigma(m, n). 
\end{split}
\]

If $m<0$, there is also an intersection at $(1, 0, 0)$ due to 
\begin{equation}\label{eq-G-mneg-2}
G(1, z, w) = 2^{n-1} a_n z^{n-m} + O((|z|+|w|)^{n-m+1}),\quad |z|+|w|\to 0   
\end{equation}
and 
\begin{equation}\label{eq-G*-mneg-2}
G^*(1, z, w) = 2^{n-1}\overline{a_n} w^{-2m} + O((|z|+|w|)^{-2m+1}),\quad |z|+|w|\to 0.   
\end{equation}
Hence $I_{(1, 0, 0)}(G, G^*) \ge -2m(n-m)$ and we conclude that the number of points $(t, z)$ where $|z|=1$ and both $g$ and $g^*$ vanish is at most 
\[
\begin{split}
& \deg G \deg G^* - I_{(0, 1, 0)}(G, G^*) - I_{(1, 0, 0)}(G, G^*) - (d-1) \\ 
& \le  (2n-m-1)(n-2m-1) - (n-1)(-m-1)
+ 2m(n-m) - 2(d - 1) \\ & = 
2\sigma(m, n). 
\end{split}
\]

It remains to consider the case $m=-n$. We use the post-composition with an $\mathbb R$-linear function as in the proof of Theorem~2.1 in~\cite{PAMS}. That is, for $|z|=1$ we have $\overline{p(z)}=\sum_{k=-n}^n \overline{a_{-k}} z^k$ and therefore the terms with $z^{-n}$ cancel out in the sum 
\[
\widehat{p}(z):= p(z) - \frac{a_{-n}}{\overline{a_n}} \overline{p(z)}
=\sum_{k=1-n}^n \left(a_k - \frac{a_{-n}}{\overline{a_n}}\overline{a_{-k}} \right) z^k.     
\]
The map $w\mapsto w -  \frac{a_{-n}}{\overline{a_n}}\overline{w}$ is invertible because $|a_{-n}|\ne |a_n|$. Hence the image of $\mathbb T$ under~$p$ has the same number of self-intersections as its image under~$\widehat{p}$. Since $\widehat{p}\in \mathcal P_{m, n}$ with $m\ge 1-n$, it has at most $\sigma(m, n)$ self-intersections. It is easy to see that the function $\sigma$ in ~\eqref{upper-bound-thm} satisfies $\sigma(m, n)\le \sigma(m-1, n)$ for all $m=1-n, \dots, n$. Therefore $\sigma(m, n) \le \sigma(-n, n)$. 

This completes the proof of the upper bound in Theorem~\ref{self-intersection-thm}. The sharpness of this bound is addressed in the following section.    
 
\section{Proof of Theorem~\ref{self-intersection-thm}: sharpness}
 
\begin{proposition}\label{example-sharpness} Suppose $n, m\in \mathbb Z$ with $n > |m|\ge 1$ and $d=\gcd(n, m)$. Then there exist $\epsilon>\delta>0$ such that the Laurent polynomial $p(z) = z^n + \delta z^{m+1} + \epsilon z^m$ has $\sigma(m, n)$ self-intersections on $\mathbb T$. 
\end{proposition}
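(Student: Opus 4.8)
The plan is to produce an explicit family realizing the bound, so the proof is a direct construction followed by a self-intersection count. The key idea is that the dominant term $z^n$ forces the image curve $t\mapsto p(e^{it})$ to be, up to a small perturbation, the $n$-fold-wound unit circle; a generic such curve has exactly $(n-1)^2$ self-intersections, and the two lower-order terms $\delta z^{m+1}+\epsilon z^m$ are chosen to simultaneously (i) break the exceptional degeneracies of Theorem~\ref{self-intersection-thm} and (ii) produce the precise deficit $\gcd(n,m)-1$ from the maximum $(n-1)(n-m)$ that appears in $\sigma(m,n)$. Concretely, I would first record that the self-intersection count is lower-semicontinuous in the coefficients away from the exceptional locus — a self-intersection $\{z_1,z_2\}$ with $p(z_1)=p(z_2)$ and $(z_1,z_2)$ a transverse/nondegenerate solution persists under small perturbation — so it suffices to locate $\sigma(m,n)$ self-intersections for one carefully chosen pair $(\epsilon,\delta)$, or to take a limit of nearby polynomials having at least that many.

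The main computation is to analyze the system $p(z_1)=p(z_2)$, $z_1,z_2\in\mathbb T$, $z_1\ne z_2$, via the substitution from Lemma~\ref{self-intersections}: write the pair as $\{e^{i\theta}z,e^{-i\theta}z\}$ with $z\in\mathbb T$, $t=\cos\theta\in(-1,1)$, and study the common zeros of $g(t,z)$ and $g^*(t,z)$ on $\{|z|=1\}$. For $p(z)=z^n+\delta z^{m+1}+\epsilon z^m$ these are
\begin{equation}\label{eq-g-explicit}
g(t,z)=a_n U_{n-1}(t)z^{n-m}+\delta U_m(t)z+\epsilon U_{m-1}(t),
\end{equation}
\begin{equation}\label{eq-gs-explicit}
g^*(t,z)=\overline{a_n}U_{n-1}(t)+\delta U_m(t)z^{n-m-1}+\epsilon U_{m-1}(t)z^{n-m},
\end{equation}
with $a_n=1$. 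I would first treat the degenerate base point $\epsilon=\delta=0$, where $g=U_{n-1}(t)z^{n-m}$ and $g^*=U_{n-1}(t)$, so the common zeros with $|z|=1$ are $\{t=\cos(k\pi/n):1\le k\le n-1\}\times\mathbb T$ — a union of $n-1$ whole circles. The perturbation $\delta z^{m+1}+\epsilon z^m$ breaks each such circle into finitely many points; a Bézout/resultant count on the curves \eqref{eq-g-explicit}–\eqref{eq-gs-explicit} (now genuinely zero-dimensional for generic small $\epsilon,\delta$), minus the intersections at $z=0$ and at infinity already accounted for in Section~3, should leave exactly $2\sigma(m,n)$ points, hence $\sigma(m,n)$ self-intersections after the involution $(t,z)\mapsto(-t,-z)$ is quotiented out. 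The role of $d-1=\gcd(n,m)-1$ is the count of common zeros of $U_{n-1}$ and $U_{m-1}$ from Lemma~\ref{U-gcd}: at those special values of $t$, the two lowest-order terms degenerate and an extra intersection escapes to $z=0$, which is why the sharp count is $(n-1)(n-m)$ minus exactly $d-1$.

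The two exceptional cases must be ruled out for this specific $p$: case (a) cannot occur because the exponents $n,m+1,m$ appearing in $p$ have $\gcd$ dividing $(m+1)-m=1$, so $p$ is not of the form $q(z^j)$ with $|j|\ge 2$; case (b) is irrelevant since we assume $n>|m|$, so $n\ne -m$. I would also need $\epsilon,\delta$ small enough that all $2\sigma(m,n)$ solutions genuinely lie on $|z|=1$ (not merely in $\mathbb C$) and are distinct and simple — this is where a compactness/implicit-function argument is used: at $\epsilon=\delta=0$ each limiting circle's worth of solutions is nondegenerate in the normal direction to $\{|z|=1\}$ after one blow-up, so for small parameters the perturbed solutions stay on the torus.

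I expect the hard part to be the bookkeeping that the perturbation splits each of the $n-1$ circles into exactly the right number of points with none lost to $z=0$ beyond the $d-1$ mandated ones, and none lost to infinity beyond what Section~3 computes — equivalently, showing the Bézout bound for \eqref{eq-g-explicit}–\eqref{eq-gs-explicit} is attained with all excess concentrated exactly at the three loci $z=0$, $(0,1,0)$, $(1,0,0)$ at the multiplicities already established. A clean way to organize this is to compute the resultant $\mathrm{Res}_z(g,g^*)$ as a polynomial in $t$, identify its degree and its roots in $(-1,1)$ explicitly for the chosen $p$, and check that the number of admissible roots (those yielding $z\in\mathbb T$, $z\ne\pm1$, via \eqref{eq-g-explicit}) equals $2\sigma(m,n)$; the choice $\epsilon>\delta>0$ with both small is exactly what makes this resultant have all its relevant roots real, simple, and in the open interval.
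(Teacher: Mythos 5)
Your reduction to counting common zeros of $g$ and $g^*$ on $\{|z|=1\}$ via Lemma~\ref{self-intersections}, and your check that the exceptional cases (a), (b) cannot occur for this trinomial, are consistent with the paper. But the heart of the proposition --- a \emph{lower} bound of $2\sigma(m,n)$ zeros of $g$ on $(-1,1)\times\mathbb T$ --- is not established by anything in your outline. Bezout's theorem together with the excess-intersection bookkeeping at $z=0$, $(0,1,0)$ and $(1,0,0)$ can only ever give an \emph{upper} bound (that is exactly Section~3 of the paper), so ``the Bezout count minus the accounted excess should leave exactly $2\sigma(m,n)$ points'' cannot certify that these points exist, are distinct, have real $t\in(-1,1)$, and lie on $|z|=1$. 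Your fallback --- perturbing from $\epsilon=\delta=0$ --- hits a genuine obstruction: at that base point the common zero set is a union of $n-1$ full circles $\{t=\cos(k\pi/n)\}\times\mathbb T$, a positive-dimensional, completely non-transverse configuration, so no implicit-function or persistence statement applies, and ``each circle breaks into exactly the right number of points on the torus'' is precisely the claim to be proved, not a consequence of nondegeneracy ``after one blow-up.'' Likewise the resultant plan (``all relevant roots real, simple, and in $(-1,1)$ for small $\epsilon>\delta>0$'') is the conclusion restated rather than argued; note also that the smallness condition that actually works involves $\epsilon$ and $\delta/\epsilon$, not merely $\epsilon,\delta$ small, and the aside that a perturbed $n$-fold circle generically has $(n-1)^2$ self-intersections is neither needed nor relevant to the count $\sigma(m,n)$.

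The paper closes this gap with a topological argument absent from your proposal: for each $t\in(-1,1)$ it considers the closed curve $\gamma_t(z)=U_{n-1}(t)z^{n-m}+\delta U_m(t)z+\epsilon U_{m-1}(t)$, $|z|=1$, uses the function $\Phi$ in \eqref{eq-rational-Phi} to choose $\epsilon$ and $\delta/\epsilon$ so small that $\gamma_t$ and $\gamma_t'$ never vanish simultaneously, and tracks the winding number $N(t)$ of $\gamma_t$ about $0$: it equals $n-m$ for most $t$, equals $0$ at the $n-d$ zeros of $U_{n-1}$ that are not zeros of $U_{m-1}$, and equals $1$ at the $d-1$ common zeros of $U_{n-1}$ and $U_{m-1}$. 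The stability result (Lemma~\ref{lemma-stability-winding} and Corollary~\ref{corollary-winding-zeros}) converts the total variation of $t\mapsto N(t)$ into a lower bound on the number of zeros of $g$ on $(-1,1)\times\mathbb T$, giving $2(n-d)(n-m)+2(d-1)(n-m-1)=2\sigma(m,n)$ zeros and hence $\sigma(m,n)$ self-intersections. Some such degree/winding-number mechanism (or an equivalent argument-principle count) is the missing ingredient your outline would need to become a proof.
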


The proof is based on the following lemma concerning the winding number of $C^1$ curves. We write $N(\gamma)$ for the winding number of a closed curve $\gamma$ about $0$. 
When $\gamma \colon \mathbb T \to \mathbb C$ is differentiable, we denote its tangent vector by $\gamma' = \frac{d}{ds} \gamma(e^{is})$.
Also let $D(a, r)=\{z\in \mathbb C\colon |z-a|<r\}$.

\begin{lemma}\label{lemma-stability-winding}  Let $\gamma \colon \mathbb T \to \mathbb C$ be a $C^1$ map with a finite zero set $\gamma^{-1}(0) = \{z_1,\dots, z_p\}$. If $\gamma'$ does not vanish at $z_1, \dots, z_p$, then there exists $\epsilon>0$ such that for any two curves $\gamma_k\colon \mathbb T\to\mathbb C\setminus \{0\}$ with $\|\gamma_k-\gamma\|_{C^1} < \epsilon$  ($k=1, 2$)  we have $|N(\gamma_1) - N(\gamma_2)|\le p$.
\end{lemma}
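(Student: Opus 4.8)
The plan is to decompose the winding number of an arbitrary curve close to $\gamma$ into a contribution from short arcs around the zeros of $\gamma$ and a contribution from the rest of $\mathbb T$, to observe that the latter is stable under $C^1$-small perturbations, and to show that each of the former carries an ambiguity of at most one full turn. Throughout, for a $C^1$ curve $\eta\colon\mathbb T\to\mathbb C\setminus\{0\}$ and an arc $A\subset\mathbb T$ I write $\Delta\arg_A\eta$ for the continuous increment of $\arg\eta(e^{is})$ as $e^{is}$ traverses $A$, so that $2\pi N(\eta)=\sum_A\Delta\arg_A\eta$ for any partition of $\mathbb T$ into finitely many arcs.

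Set $v_j:=\gamma'(z_j)\ne 0$. By continuity of $\gamma'$ choose pairwise disjoint closed arcs $A_j\subset\mathbb T$ centered at $z_j$ so short that $\gamma$ is injective on each $A_j$ (hence $\gamma$ vanishes on $A_j$ only at $z_j$) and $\re\bigl(\overline{v_j}\,\gamma'(e^{is})\bigr)\ge\tfrac12|v_j|^2$ on $A_j$; after further shrinking we may also assume the two values of $\gamma$ on $\partial A_j$ are as close to $\pm\tau_j v_j$ as we wish, where $\tau_j>0$ is comparable to the length of $A_j$. On the compact set $K:=\mathbb T\setminus\bigcup_j(A_j\setminus\partial A_j)$ we have $|\gamma|\ge\delta$ for some $\delta>0$. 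Now fix $\epsilon>0$ with $\epsilon<\delta$, with $\epsilon$ a sufficiently small fraction of $\min_j\tau_j|v_j|$, and small enough that the error terms below are jointly less than $2\pi$. Let $\gamma_1,\gamma_2$ be curves into $\mathbb C\setminus\{0\}$ with $\|\gamma_k-\gamma\|_{C^1}<\epsilon$. On $K$ we have $|\gamma_k|>0$ and $\gamma_k'/\gamma_k$ is uniformly close to $\gamma'/\gamma$ (numerators $C^0$-close, denominators $C^0$-close and bounded below), so for each component arc $B$ of $K$ the increments $\Delta\arg_B\gamma_1$ and $\Delta\arg_B\gamma_2$ are close, and $\sum_B|\Delta\arg_B\gamma_1-\Delta\arg_B\gamma_2|$ is within the error budget.

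On each $A_j$, $C^1$-closeness forces $\re(\overline{v_j}\gamma_k')>0$, so in the coordinate $w\mapsto w/v_j$ the image $\gamma_k(A_j)$ is a graph over the real axis whose real part increases monotonically from a negative to a positive value; as $\gamma_k$ does not vanish, at the unique parameter where this real part is $0$ the imaginary part is nonzero, i.e.\ $\gamma_k|_{A_j}$ passes the origin on a definite side. Let $L_j$ be the closed loop obtained by following $\gamma_1|_{A_j}$, then the segment joining the two curves' right endpoints, then $\gamma_2|_{A_j}$ in reverse, then the segment joining their left endpoints; these segments lie near $\pm\tau_j v_j$, hence at distance $\gtrsim\tau_j|v_j|$ from $0$, which exceeds $\epsilon$, so $L_j\subset\mathbb C\setminus\{0\}$ and $\Delta\arg_{A_j}\gamma_1-\Delta\arg_{A_j}\gamma_2=2\pi N(L_j)-\Delta\arg(\text{right segment})-\Delta\arg(\text{left segment})$, the last two terms being within the error budget (short segments far from $0$). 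Finally $N(L_j)$ equals the signed number of intersections of $L_j$ with the ray $\{it\,v_j:t>0\}$ in general position: the segments miss this ray, while $\gamma_1|_{A_j}$ and $\gamma_2|_{A_j}$ each meet it at most once, by monotonicity of the real part, and with opposite orientations when both do; hence $N(L_j)\in\{-1,0,1\}$ and $|\Delta\arg_{A_j}\gamma_1-\Delta\arg_{A_j}\gamma_2|\le 2\pi+(\text{error})$.

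Combining, $2\pi|N(\gamma_1)-N(\gamma_2)|\le 2\pi p+(\text{sum of all error terms})<2\pi(p+1)$, and since $N(\gamma_1)-N(\gamma_2)\in\mathbb Z$ this forces $|N(\gamma_1)-N(\gamma_2)|\le p$. I expect the main difficulty to be the local analysis on the arcs $A_j$: because $\gamma$ is only $C^1$, there is no second-order Taylor expansion at $z_j$, so the monotone-graph property of the perturbed curves and the resulting ``one-sided'' crossing of the origin must be deduced purely from $\gamma'(z_j)\ne 0$ together with $C^1$-closeness and the smallness of $A_j$; one then has to control the winding of the comparison loop $L_j$ by a crossing count while absorbing the unavoidable endpoint-mismatch segments into the error estimate.
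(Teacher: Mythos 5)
Your proof is correct and follows essentially the same strategy as the paper's: the discrepancy in winding number is localized to small arcs around the zeros $z_1,\dots,z_p$, each zero contributes an ambiguity of at most $2\pi$ in the argument increment (the perturbed curves may pass the origin on either side), the contribution away from the zeros is stable under $C^1$-small perturbations, and integrality of $N(\gamma_1)-N(\gamma_2)$ finishes the argument. The only difference is one of detail: where the paper notes that each of the $p$ nearly diametral arcs has argument change close to $\pm\pi$, you compare $\gamma_1$ and $\gamma_2$ directly via a closed comparison loop and a ray-crossing count, which is a more explicit rendering of the same local estimate.
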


\begin{proof} The assumptions on $\gamma$ imply that there exists $r>0$ such that $\gamma(\mathbb T)\cap D(0, r)$ consists of $p$ arcs that pass through $0$ and are close to a parameterized diameter of $D(0, r)$ in the $C^1$ norm. When $\gamma_1$ and $\gamma_2$ are sufficiently close to $\gamma$ in the $C^1$ norm, their intersections with $D(0, r/2)$ also consist of $m$ nearly diametral arcs. The change of argument along any such arc is close to either $\pi$ or $-\pi$. Summing the changes of $\arg \gamma_1$ and of $\arg\gamma_2$ along $p$ nearly diametral arcs leads to the conclusion.     
\end{proof}

\begin{corollary}\label{corollary-winding-zeros} Let $H\colon [a, b]\times \mathbb T \to \mathbb C$ be a $C^1$-smooth map. For each $t$ let $\gamma_t$ denote the closed curve $\gamma_t(s)=H(t, s)$ and suppose that $\gamma_t$ and its tangent vector $\gamma_t'$ do not vanish simultaneously.  Furthermore, suppose that $\gamma_a$ and $\gamma_b$ do not pass through $0$. Then there exist at least $|N(\gamma_a)-N(\gamma_b)|$ pairs $(t, z)\in (a, b)\times \mathbb T$ such that $H(t, z)=0$.     
\end{corollary}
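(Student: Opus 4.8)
My plan is to argue by contradiction: suppose $H(t,z)\ne 0$ for all $(t,z)\in(a,b)\times\mathbb T$, and derive that $N(\gamma_a)=N(\gamma_b)$, or more precisely that $|N(\gamma_a)-N(\gamma_b)|$ is smaller than asserted. The point of Lemma~\ref{lemma-stability-winding} is to control the jump in winding number across the (finitely many) parameters $t$ where $\gamma_t$ does pass through $0$; away from those parameters the winding number is locally constant by continuity.

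First I would set $Z=\{t\in[a,b]\colon \gamma_t^{-1}(0)\ne\emptyset\}$. On the complement $[a,b]\setminus Z$ the map $t\mapsto N(\gamma_t)$ is well-defined and locally constant (the winding number is a homotopy invariant and $H$ is continuous), hence constant on each connected component of $[a,b]\setminus Z$. The hypotheses exclude $a,b$ from $Z$. The main structural step is to show that $Z$ is finite, or at least that it decomposes into finitely many clusters across each of which the winding number changes by a controlled amount. For a fixed $t_0\in Z$, the hypothesis that $\gamma_{t_0}$ and $\gamma_{t_0}'$ do not vanish simultaneously means $\gamma_{t_0}$ satisfies the hypotheses of Lemma~\ref{lemma-stability-winding} with some $p=p(t_0)$ equal to the number of zeros of $\gamma_{t_0}$ on $\mathbb T$; moreover this zero set is finite (a $C^1$ curve through $0$ with nonvanishing derivative there is locally injective near each zero, so zeros are isolated, and $\mathbb T$ is compact). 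Applying Lemma~\ref{lemma-stability-winding} to $\gamma=\gamma_{t_0}$, there is $\epsilon>0$ so that any two non-vanishing curves within $\epsilon$ of $\gamma_{t_0}$ in $C^1$ have winding numbers differing by at most $p(t_0)$; by continuity of $H$ in the $C^1$ norm in $s$ uniformly in $t$, there is $\eta>0$ with $\|\gamma_t-\gamma_{t_0}\|_{C^1}<\epsilon$ for $|t-t_0|<\eta$. Hence for $t',t''\in(t_0-\eta,t_0+\eta)\setminus Z$ we get $|N(\gamma_{t'})-N(\gamma_{t''})|\le p(t_0)$.

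Now I would cover $Z$ by such intervals and extract a finite subcover by compactness of $Z$ (it is closed, being the preimage of the closed condition $\min_{s}|H(t,s)|=0$ under a continuous map). Between consecutive points of a finite partition of $[a,b]$ refining this subcover, $N(\gamma_t)$ is either constant or jumps by at most the relevant $p(t_i)$; telescoping along the partition from $a$ to $b$ gives $|N(\gamma_a)-N(\gamma_b)|\le\sum p(t_i)$. Finally, each zero $(t_i,z)$ with $\gamma_{t_i}(z)=0$ is one of the pairs we are counting, and the total number of such pairs over all $t_i$ in the partition is exactly $\sum_i p(t_i)$; if there were fewer than $|N(\gamma_a)-N(\gamma_b)|$ zeros in $(a,b)\times\mathbb T$ altogether, the telescoping bound would be violated. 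I expect the main obstacle to be the bookkeeping that turns the local Lemma~\ref{lemma-stability-winding} estimates into a single global count without double-counting zeros: one must choose the partition so that each zero-carrying parameter lies in exactly one subinterval and contributes its multiplicity $p(t_i)$ once, which is why passing to a finite subcover and then a common refinement is the right move rather than trying to handle a possibly infinite $Z$ directly.
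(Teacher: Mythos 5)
Your proposal is correct and takes essentially the same route as the paper: both arguments bound the jump of $t\mapsto N(\gamma_t)$ at each parameter where $\gamma_t$ passes through $0$ by the number of zeros of that curve, via Lemma~\ref{lemma-stability-winding}, and then sum these jumps (the total variation of $N$) against the count of zero pairs. The only difference is bookkeeping: where you invoke compactness and a finite subcover to handle a possibly infinite set of bad parameters, the paper simply observes that if infinitely many parameters carry zeros then there are infinitely many pairs $(t,z)$ with $H(t,z)=0$ and the conclusion is trivial, so the bad set may be assumed finite.
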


\begin{proof}  Let $T$ be the set of all values $t\in [a, b]$ such that $\gamma_t$ passes through $0$. It suffices to consider the case when $T$ is finite. The set $[a, b]\setminus T$ consists of intervals on which the function $t\mapsto N(\gamma_t)$ is constant. Applying Lemma~\ref{lemma-stability-winding} to each $\gamma_t$ with $t\in T$ we find that the total variation of $t\mapsto N(\gamma_t)$ on $[a, b]$ is bounded above by the number of pairs $(t, z)\in (a, b)\times \mathbb T$ such that $H(t, z)=0$. 
\end{proof}
 
\begin{proof}[Proof of Proposition~\ref{example-sharpness}] Let $d=\gcd(m, n)$ and note that the function $\sigma$ from~\eqref{upper-bound-thm}  can also be written as 
\[
\sigma(m, n) = (n-d)(n-m) + (d-1) (n-m-1).
\] 

The polynomial $g$ from~\eqref{g-note} takes the form 
\begin{equation}\label{g-trinom}
\begin{split}
  g(t, z) & = U_{n-1}(t) z^n + \delta U_{m}(t)z^{m+1} +\epsilon U_{m-1}(t)z^m  \\
  & = \left( U_{n-1}(t) z^{n-m} + \delta U_{m}(t)z +\epsilon U_{m-1}(t)\right) z^m.
%  \\ &= \left(z^{n} \sin n\theta  + \delta z^{m+1}  \sin (m+1)\theta  + \epsilon z^m \sin m\theta\right) \frac{1}{\sin \theta}
\end{split}
\end{equation}
For each $t \in (-1, 1)$ we consider the closed curve 
\begin{equation}\label{eq-closed-curves}
\gamma_t(z) = U_{n-1}(t) z^{n-m} + \delta U_{m}(t)z +\epsilon U_{m-1}(t), \quad |z|=1.    
\end{equation}
We write $N(t)$ for the winding number of $g_t$ about $0$, leaving $N(t)$ undefined when $\gamma_t$ passes through $0$. 

Observe that the rational function 
\begin{equation}\label{eq-rational-Phi}
\Phi(t): = \left(\frac{U_{n-1}(t)}{U_{m-1}(t)}\right)^2 + 
\left(\frac{U_{m-1}(t)}{U_{m}(t)}\right)^2 
\end{equation}
is bounded away from $0$ when $t\in [-1, 1]$. Indeed, any value $t$ with $\Phi(t)=0$ must have $U_{m-1}(t)=0$ but then $U_{n-1}/U_{m-1}$ cannot vanish at $t$ since the zeros of Chebyshev polynomials are simple. Thus, $\Phi$ has no real zeros.

Suppose that $(t_0, z_0)\in (-1, 1)\times \mathbb T$ is such that $\gamma_{t_0}'(z_0)=0=\gamma_{t_0}(z_0)$. From~\eqref{eq-closed-curves} we have
\begin{equation}\label{eq-closed-curves-deriv}
\gamma_{t_0}'(z) = i (n-m) U_{n-1}(t_0) z_0^{n-m} + i\delta U_{m}(t_0)z_0    
\end{equation}
and the system $\gamma_{t_0}'(z_0)=0=\gamma_{t_0}(z_0)$ simplifies to
\begin{equation}\label{eq-closed-curves-system}
(m-n) U_{n-1}(t_0) z_0^{n-m} = \delta U_{m}(t_0)z_0 
= \frac{m-n}{n-m-1} \epsilon U_{m-1}(t_0).
\end{equation} 
In terms of the rational function~\eqref{eq-rational-Phi}, the system~\eqref{eq-closed-curves-system} yields
\begin{equation}\label{eq-closed-curves-system-2}
\Phi(t_0) = \frac{\epsilon^2}{(n-m-1)^2} 
+ \left(\frac{\delta}{\epsilon}\right)^2 \left(\frac{n-m-1}{m-n}\right)^2.
\end{equation}
By choosing $\epsilon$ and $\delta/\epsilon$ sufficiently small, we can make sure that the right side of~\eqref{eq-closed-curves-system-2} is less than $\inf_{[-1,1]}\Phi$. Thus $\gamma_t'$ and $\gamma_t$ do not vanish simultaneously when $\epsilon$ and $\delta$  are chosen in this way.

Observe the following facts about the curves~\eqref{eq-closed-curves}, which easily follow from the properties of Chebyshev polynomials stated in~\S\ref{sec-prelim}. 
\begin{enumerate}[(a)]
    \item When $U_{n-1}(t) > \delta |m+1| + \epsilon |m|$, we have $N(t)=n-m$. If $\delta+\epsilon$ is sufficiently small, this case holds on most of the interval $(-1, 1)$.  
    \item When $U_{n-1}(t) = 0$ and $\epsilon |U_{m-1}(t)| > \delta |m+1|$, we have $N(t) = 0$. There are $n-d$ such  values of $t$ when $\delta/\epsilon$ is sufficiently small.
    \item When $U_{n-1}(t) = 0 = U_{m-1}(t)$, we have $N(t)= 1$. There are $d-1$ such values of~$t$. 
\end{enumerate}
Applying Corollary~\ref{corollary-winding-zeros} to appropriate subintervals of $(-1, 1)$, we find that there are at least
\[
2 (n-d)(n-m)  + 2 (d-1)(n-m-1) = 2\sigma(m, n)
\]
pairs $(t, z)$ such that $g(t, z)=0$. By Lemma~\ref{self-intersections} this implies the existence of at least $\sigma(m, n)$ self-intersections, matching the upper bound in Theorem~\ref{self-intersection-thm}.   
\end{proof}

\section{Rotation number and the normality of trigonometric curves}\label{sec:rotation} 

Whitney~\cite[Theorem 2]{Whitney1937} showed that the self-intersections of a closed curve largely determine its rotation number, which is defined as follows. 

\begin{definition}\label{def-rotation-number} If $\gamma$ is a $C^1$-smooth closed planar curve with nonvanishing tangent vector $\gamma'$, then the \emph{rotation number} of $\gamma$ is the winding number of $\gamma'$ about $0$.     
\end{definition}

A curve $\gamma$ in Definition~\ref{def-rotation-number} is called \emph{regular}. Following~\cite{Whitney1937}, we say that a regular curve is \emph{normal} if it has a finite number of self-intersections and each of them is a simple transversal crossing: that is, $\gamma$ passes through a point twice with non-collinear tangent vectors. Fix a base point $w_0\in \gamma$ that is not a crossing. When $\gamma$ is traced starting from $w_0$, in a neighborhood of each crossing we can identify the ``first'' and ``second'' arcs of $\gamma$, both of which inherit their orientation from $\gamma$. A crossing $w$ is \emph{positive} if the second arc passes $w$ from left to right when viewed from the first arc. Otherwise $w$ is a negative crossing. 

\begin{theorem} (\cite[Theorem 2]{Whitney1937}, see also~\cite[Theorem 2]{Titus}) \label{thm-whitney} Let $\gamma$ be a normal curve with a base point $w_0$ that is not a crossing. Let $N^+$ and $N^-$ be the numbers of positive and negative crossings, respectively.
Then the rotation number of $\gamma$ is equal to $N^+ - N^- + \mu$ where $\mu$ is the sum of the winding numbers of $\gamma$ about two points near $w_0$ and lying on opposite sides of $\gamma$.  
\end{theorem}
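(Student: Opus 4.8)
The plan is to recover the formula from a single degree computation for the \emph{secant‑direction map} of $\gamma$. First reparametrize $\gamma$ by $[0,1]$ with endpoints identified, arranging $\gamma(0)=\gamma(1)=w_0$; since $w_0$ is not a crossing and $\gamma$ is regular and $C^1$, this gives $\gamma'(0)=\gamma'(1)\neq 0$ and $\gamma(s)\neq w_0$ for $0<s<1$. On the closed triangle $T=\{(s_1,s_2):0\le s_1\le s_2\le 1\}$ define
\[
\Psi(s_1,s_2)=\frac{\gamma(s_2)-\gamma(s_1)}{|\gamma(s_2)-\gamma(s_1)|}\ \text{ for }\gamma(s_1)\neq\gamma(s_2),\qquad \Psi(s,s)=\frac{\gamma'(s)}{|\gamma'(s)|}.
\]
From $\gamma(s_2)-\gamma(s_1)=\gamma'(s)(s_2-s_1)+o(|s_2-s_1|)$ one checks that $\Psi$ is continuous on $T$ except at the finitely many interior points $(a,b)$ with $0<a<b<1$ and $\gamma(a)=\gamma(b)$ --- exactly one ``puncture'' for each of the $N^++N^-$ self‑intersections. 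Crucially there are no punctures on $\partial T$: the diagonal carries no double point, on the edges $\{s_1=0\}$ and $\{s_2=1\}$ a discontinuity would force $\gamma$ to revisit $w_0$, and at the corner $(0,1)$ the map extends continuously because $\gamma'(0)=\gamma'(1)$. Along the diagonal edge $\Psi$ traces the unit tangent; along $\{s_1=0\}$ it traces the direction from $w_0$ to $\gamma(s_2)$; along $\{s_2=1\}$ it traces the direction from $\gamma(s_1)$ to $w_0$.

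Next I would invoke additivity of winding numbers: a continuous map into $\mathbb C\setminus\{0\}$ on $T$ with small disks around the punctures removed has zero winding number around the oriented boundary of that region, so
\[
\deg\bigl(\Psi|_{\partial T}\bigr)=\sum_{(a,b)}\deg\bigl(\Psi|_{\partial D(a,b)}\bigr),
\]
all boundary curves positively oriented, where $\deg$ denotes the winding number about $0$. The left side splits over the three edges of $\partial T$ traversed counterclockwise. The diagonal edge contributes the winding number of $\gamma'/|\gamma'|$, i.e.\ the rotation number $r$ of $\gamma$; the edges $\{s_2=1\}$ and $\{s_1=0\}$ each contribute $-\tfrac1{2\pi}\int_0^1 d\arg\bigl(\gamma(s)-w_0\bigr)$, and this improper integral is $\mu/2$, since the winding number of $\gamma$ about a point near $w_0$ jumps by $1$ as the point crosses $\gamma$, so its principal value is half the sum $\mu$ of the two values in the statement. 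Hence $\deg(\Psi|_{\partial T})=r-\mu$. For the right side, near a puncture $\gamma(a)=\gamma(b)$ with $a<b$ we have $\gamma(s_2)-\gamma(s_1)=\gamma'(b)(s_2-b)-\gamma'(a)(s_1-a)+o(|s_1-a|+|s_2-b|)$, so $\deg(\Psi|_{\partial D(a,b)})=-\operatorname{sign}\det[\gamma'(a)\mid\gamma'(b)]$, which is well defined because transversality makes the determinant nonzero. Matching with the paper's convention --- the crossing is positive exactly when the second arc (tangent $\gamma'(b)$) crosses the first (tangent $\gamma'(a)$) from left to right, that is, exactly when $\det[\gamma'(a)\mid\gamma'(b)]<0$ --- this local degree is $+1$ at a positive crossing and $-1$ at a negative one, so the sum is $N^+-N^-$. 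Equating the two sides gives $r=N^+-N^-+\mu$.

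The genuinely delicate part is not the topology, which is just the argument principle on a punctured triangle, but the uniform sign and orientation bookkeeping: the induced orientation of $\partial T$ and the direction of traversal on each edge, the exact value $\mu/2$ of the improper integral $\tfrac1{2\pi}\int_0^1 d\arg(\gamma-w_0)$, and the identification of $\operatorname{sign}\det[\gamma'(a)\mid\gamma'(b)]$ with the paper's left‑to‑right convention must all be made consistent. Two checks keep this honest: a counterclockwise Jordan curve has $r=1$, $N^\pm=0$, $\mu=1+0$; and for a figure‑eight the single crossing flips sign as $w_0$ moves between the two loops while $\mu$ changes to compensate, confirming that only the combination $N^+-N^-+\mu$ is independent of the base point.
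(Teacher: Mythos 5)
The paper offers no proof of Theorem~\ref{thm-whitney}: it is imported by citation from Whitney and Titus, so there is no internal argument to compare yours against. Your route --- the secant-direction map $\Psi$ on the triangle $T$, punctured at the parameter pairs of the double points, with the winding of $\Psi|_{\partial T}$ computed edge by edge and equated to the sum of local degrees at the punctures --- is the classical extension of Hopf's Umlaufsatz proof to normal curves, and it is sound. I checked the bookkeeping: with $\partial T$ counterclockwise the diagonal contributes the rotation number $r$, each of the edges $\{s_1=0\}$ and $\{s_2=1\}$ contributes $-\Delta/2\pi$ where $\Delta$ is the total change of $\arg(\gamma(s)-w_0)$ over $(0,1)$, and the local degree at a puncture $(a,b)$ is $-\operatorname{sign}\det[\gamma'(a)\,|\,\gamma'(b)]$, which under the paper's left-to-right convention is $+1$ precisely at positive crossings; the corner $(0,1)$ is handled correctly via $\gamma'(0)=\gamma'(1)$, and the absence of punctures on $\partial T$ uses exactly the hypothesis that $w_0$ is not a crossing.

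The one step you assert rather than prove is $\Delta/2\pi=\mu/2$. That the winding number of $\gamma$ jumps by $1$ as a point crosses $\gamma$ at $w_0$ only says the two winding numbers entering $\mu$ differ by $1$; it does not by itself identify the improper integral with their average. You need the local (Plemelj-type) computation: for $w=w_0\pm\epsilon n$ with $n$ normal to $\gamma$ at $w_0$, split the parameter interval into $(\delta,1-\delta)$ and the two short end pieces; on the middle piece the argument change converges to the corresponding portion of $\Delta$, while on the combined short arc through $w_0$ it tends to $+\pi$ or $-\pi$ according to the side on which $w$ lies. Hence the two winding numbers are $\Delta/2\pi+1/2$ and $\Delta/2\pi-1/2$, so $\mu=\Delta/\pi$ as you claimed. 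With that paragraph supplied, your proof is complete and self-contained, and your two sanity checks (the circle and the figure-eight) are consistent with the sign conventions.
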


The base point $w_0$ in Theorem~\ref{thm-whitney} can be chosen so that $|\mu|=1$, for example if $w_0$ lies in the closure of the unbounded component of the complement of $\gamma$. 

Let us say that a Laurent polynomial $p$ is \emph{normal} if the curve $\gamma(t)=p(e^{it})$ is normal. Recall that 
$\mathcal P_{m, n}$ is the set of all Laurent polynomials of the form~\eqref{Lpoly}. The set of normal polynomials will be denoted $\mathcal N_{m, n}$. Its complement, $\mathcal P_{m, n}\setminus \mathcal N_{m, n}$ is the union of three sets defined as follows:  
\begin{itemize}
    \item $p\in \mathcal E_{m, n}^Z$ if $p'$ vanishes on $\mathbb T$;
    \item $p\in \mathcal E_{m, n}^{NT}$ if $p$ has a non-transversal self-intersection on $\mathbb T$;
    \item $p\in \mathcal E_{m, n}^{MC}$ if $p$ has a multiple crossing on $\mathbb T$, that is, $p$ takes the same value at three (or more) different points of $\mathbb T$. 
\end{itemize}
It is clear that $\mathcal E_{m, n}^Z$ is a closed subset of $\mathcal P_{m, n}$. The other two exceptional sets are not necessarily closed in $\mathcal P_{m, n}$ but they are closed in $\mathcal P_{m, n}\setminus \mathcal E_{m, n}^Z$ because sufficiently small perturbations of regular curves do not create new multiple crossings or violate transversality~\cite[Lemma~1]{Titus}. Consequently, $\mathcal N_{m, n}$ is an open subset of $\mathcal P_{m, n}$. 

Let $\mathcal P_n$ be the set of algebraic polynomials of degree $n$. This is a larger set than $\mathcal P_{0, n}$ because the latter requires $a_0\ne 0$. 

\begin{lemma}\label{lemma-multiple-crossings} If $m < n$, then the complement of the set $\mathcal E_{m, n}^{MC}$ is dense in $\mathcal P_{m, n}$. 
\end{lemma}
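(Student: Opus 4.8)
The plan is to show that curves with a multiple crossing (three or more points of $\mathbb T$ mapping to the same value) form a set that can be avoided by arbitrarily small perturbations, so that its complement is dense. The key observation is that the condition ``$p$ takes the same value at three distinct points $z_1, z_2, z_3 \in \mathbb T$'' is, for each fixed configuration, two complex (hence real-codimension-high) equations on the coefficients, so that a generic translate or tilt of the coefficient vector destroys it. The difficulty is that there are infinitely many configurations $(z_1, z_2, z_3)$, so one needs a dimension-counting or measure-theoretic argument rather than a naive finite intersection of hypersurfaces.

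Concretely, I would first reduce to the case $p \in \mathcal P_{1,n}$ when $m = 0$ (discarding the constant term changes neither $\mathcal E^{MC}$ nor density), and more generally note that adding a constant to $p$ translates the whole curve, which has no effect on self-intersection structure; so without loss of generality I may assume the constant coefficient is available as a free ``dummy'' parameter or, alternatively, work directly with the nonconstant part. Then I would set up the incidence variety
\[
Z = \{ (p, z_1, z_2, z_3) : z_j \in \mathbb T \text{ distinct}, \ p(z_1) = p(z_2) = p(z_3) \} \subset \mathcal P_{m,n} \times \mathbb T^3,
\]
and project to $\mathcal P_{m,n}$. The fiber of $Z$ over a fixed triple $(z_1, z_2, z_3)$ is an affine subspace of $\mathcal P_{m,n}$ cut out by the two independent $\mathbb C$-linear conditions $p(z_1) - p(z_2) = 0$ and $p(z_1) - p(z_3) = 0$; since $m < n$ the evaluation functionals $p \mapsto p(z_j)$ at three distinct points of $\mathbb T$ are linearly independent whenever $n - m \ge 2$ (a Vandermonde-type argument, using that $\mathcal P_{m,n}$ contains $z^m, z^{m+1}, \dots, z^n$, at least three monomials), so this fiber has real codimension $4$ in $\mathcal P_{m,n}$. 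The base $\mathbb T^3$ (minus its diagonals) has real dimension $3$. Hence $Z$ has real dimension equal to $\dim_{\mathbb R}\mathcal P_{m,n} - 4 + 3 = \dim_{\mathbb R}\mathcal P_{m,n} - 1$, and therefore its image $\mathcal E^{MC}_{m,n}$ under the projection is contained in a set of real dimension at most $\dim_{\mathbb R}\mathcal P_{m,n} - 1$, which cannot be dense; its complement is dense.

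To make this rigorous without heavy machinery I would phrase it via smooth maps and Sard's theorem, or even more elementarily: cover $\mathbb T^3 \setminus \Delta$ by countably many compact pieces, on each piece parameterize the solution set and observe it projects into a proper real-algebraic (or real-analytic) subvariety of $\mathcal P_{m,n}$, then take a countable union and invoke Baire category (a countable union of nowhere-dense closed sets has dense complement). The main obstacle is verifying the linear independence of the three evaluation functionals uniformly — i.e.\ that $p(z_1) - p(z_2)$ and $p(z_1) - p(z_3)$ are genuinely independent linear functionals on $\mathcal P_{m,n}$ for \emph{all} triples of distinct points of $\mathbb T$ once $n - m \ge 2$; this is where the hypothesis $m < n$ enters, and one must be slightly careful at the boundary case $n - m = 2$ where $\mathcal P_{m,n}$ has only three monomials, so the two functionals span a codimension-$1$ (complex) subspace of the dual, still giving real codimension $4$ as needed. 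Once that is in hand, the codimension count closes the argument.
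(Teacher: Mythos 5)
Your main line of argument is sound and in fact amounts to the same kind of dimension count the paper uses, but organized differently: you fiber the incidence set $Z\subset\mathcal P_{m,n}\times\mathbb T^3$ over the configuration space and compute the fibers by linear algebra (kernels of the two difference functionals $p\mapsto p(z_1)-p(z_2)$ and $p\mapsto p(z_1)-p(z_3)$), whereas the paper parameterizes $\mathcal E^{MC}_{m,n}$ explicitly by factoring $p(z)-w=(z-\zeta_1)(z-\zeta_2)(z-\zeta_3)q(z)$, which forces a case split on the sign of $m$ and, for $m\ge 1$, a rank computation for the linear system expressing that the coefficients of $z^1,\dots,z^{m-1}$ vanish. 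Your version avoids that case split entirely (the Vandermonde argument is uniform in the sign of $m$, since one factors $z_j^m$ out of each row), and the passage from ``$Z$ has dimension $\dim_{\mathbb R}\mathcal P_{m,n}-1$'' to ``the projection of $Z$ has dense complement'' is routine (Lipschitz images do not raise dimension, or your compact-exhaustion-plus-Baire plan). That part I would accept.

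The genuine gap is the case $n=m+1$, which is allowed by the hypothesis $m<n$ but not covered by your argument: you establish linear independence of the evaluation functionals only when $n-m\ge 2$ (with only the two monomials $z^m,z^{m+1}$ three evaluation functionals cannot be independent), and you then identify the delicate boundary case as $n-m=2$, which in fact needs no special care, while leaving $n-m=1$ unaddressed. For $n-m=1$ the codimension-$4$ claim genuinely fails for special triples: e.g.\ for $m\ge 3$ take distinct unimodular $z_1,z_2,z_3$ with $z_1^m=z_2^m=z_3^m$; then $p=az^m$ lies in the common kernel of both difference functionals, so the fiber has real codimension $2$, not $4$. One can repair this either as the paper does --- for $n=m+1$ one has $p(z)=z^m(a_nz+a_m)$ on $\mathbb T$, so three points with a common value would lie on a circle $\{|a_nz+a_m|=|w|\}$ not concentric with $\mathbb T$, hence $\mathcal E^{MC}_{m,m+1}=\emptyset$ --- or by stratifying: the degenerate triples form a one-dimensional subset of $\mathbb T^3$, and codimension $2$ over a one-dimensional base still gives total dimension $\dim_{\mathbb R}\mathcal P_{m,n}-1$. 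Either patch is short, but as written your proof does not prove the lemma for $n=m+1$.
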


\begin{proof} We may assume $n>0$, by replacing $z$ with $1/z$ if necessary. 
Suppose $p\in \mathcal E_{m, n}^{MC}$, that is, there exist $w\in \mathbb C$ and distinct points $\zeta_k\in \mathbb T$ such that $p(\zeta_k)=w$ for $k=1, 2, 3$. We begin by ruling out the case $n = m+1$. Indeed, in this case $p(z)=z^m (a_{n} z + a_m)$ which implies that the points $\zeta_k$, $k=1, 2, 3$, lie on the circle $\{z \colon |a_{n} z + a_m| = |w|\}$ which is not concentric with $\mathbb T$. This is clearly impossible. 

From now on $n\ge m+2$. We consider the cases $m\le 0$ and $m\ge 1$ separately. 

\textbf{Case $m\le 0$.} The polynomial $z^{-m}(p(z)-w)$ belongs to $\mathcal P_{n-m}$ and vanishes at each $\zeta_k$. It follows that $n-m\ge 3$ and there exists 
$q\in \mathcal P_{n-m-3}$ such that 
\begin{equation}\label{eq-factor-zeros}
p(z) = w + (z-\zeta_1)(z-\zeta_2)(z-\zeta_3) z^{m}q(z).    
\end{equation}
Formula~\eqref{eq-factor-zeros} provides a smooth surjection from the space of tuples $(w, \zeta_1, \zeta_2, \zeta_3, q)$, that is $\mathbb C\times \mathbb T^3\times \mathcal P_{n-m-3}$, onto $\mathcal E_{m, n}^{MC}$. The product $\mathbb C\times \mathbb T^3\times \mathcal P_{n-m-3}$ is a real manifold of dimension $2(n-m)+1$, while 
$\dim_{\mathbb R}\mathcal P_{m, n} = 2(n-m)+2$.  
Thus, $\mathcal E_{m, n}^{MC}$ is a measure zero subset of $\mathcal P_{m, n}$.   

\textbf{Case $m\ge 1$.} The Laurent polynomial $z^{-m}(p(z)-w)$ need not be in $\mathcal P_{n-m}$, so we factor $p(z)-w$ instead: 
\begin{equation}\label{eq-factor-zeros-2a}
p(z)-w = (z-\zeta_1)(z-\zeta_2)(z-\zeta_3) \, q(z)
\end{equation}
where $q\in \mathcal P_{n-3}$. 
Since $p(0)=0$, the formula~\eqref{eq-factor-zeros-2a} can be written as 
\begin{equation}\label{eq-factor-zeros-2}
p(z) =(z-\zeta_1)(z-\zeta_2)(z-\zeta_3) \, q(z) + \zeta_1\zeta_2\zeta_3 \, q(0).     
\end{equation}
The formula~\eqref{eq-factor-zeros-2} provides a smooth map from $\mathbb T^3\times \mathcal P_{n-3}$ onto $\mathcal E_{m, n}^{MC}$. If $m=1$, the proof concludes with the observation that 
\[
\dim_{\mathbb R}(\mathbb T^3\times \mathcal P_{n-3}) = 2n-1 < 2n = \dim_{\mathbb R} \mathcal P_{1, n}. 
\]
From now on $m > 1$. In order for $p$ in~\eqref{eq-factor-zeros-2} to belong to $\mathcal P_{m, n}$, the coefficients of $q$ must satisfy a system of $m-1$ homogeneous linear equations. Let us write $(z-\zeta_1)(z-\zeta_2)(z-\zeta_3)=z^3+c_2z^2+c_1z+c_0$ and $q(z)=\sum_{k=0}^{n-3} b_k z^k$. The coefficients of $z^1, \dots, z^{m-1}$ in $p$, which must all be $0$, are given by the matrix-vector product
\begin{equation}\label{eq-matrix-vector}
\begin{pmatrix}
c_1 & c_0 & 0 & 0 & 0 &  \cdots & 0 \\
c_2 & c_1 & c_0 & 0 & 0 & \cdots & 0 \\
1 & c_2 & c_1 & c_0 & 0 & \cdots & 0 \\
0 & 1 & c_2 & c_1 & c_0 & \cdots & 0 \\
\vdots & \vdots & \vdots & \vdots & \vdots & \ddots & \vdots \\
0 & 0 & 0 & 0 & 0 & \cdots & c_0
\end{pmatrix} 
\begin{pmatrix}
b_0 \\ b_1 \\ b_2 \\ b_3 \\  \vdots \\ b_{m-1}
\end{pmatrix}
\end{equation}
Since $c_0 = -\zeta_1\zeta_2\zeta_3\ne 0$, the matrix in~\eqref{eq-matrix-vector} has rank $m-1$. Specifically, $b_1,\dots, b_{m-1}$ can be expressed as linear functions of $b_0$ with coefficients depending smoothly on $\zeta_1,\dots, \zeta_3$. This leaves $n-m-1$ free coefficients in $q$: namely, $b_0$ and $b_m,\dots, b_{n-3}$. Thus $\mathcal E_{m, n}^{MC}$ is the image of an $(2n-2m+1)$-dimensional space under a smooth map, which yields the desired conclusion since $\dim_{\mathbb R}\mathcal P_{m, n} = 2n-2m+2$.  
\end{proof}

\begin{lemma}\label{lemma-change-radius} Suppose $p\in \mathcal P_{m, n}$ is a Laurent polynomial that is not of the form $p(z) = q(z^j)$ with $|j|>1$. 
Then, except for a discrete set of values $r>0$, the rescaled polynomial $p_r(z) = p(r^{-1}z)$ belongs to neither $\mathcal E_{m, n}^Z$ nor $\mathcal E_{m, n}^{NT}$.
\end{lemma}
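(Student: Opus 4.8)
The plan is to treat the two exceptional sets in the statement separately. Throughout, set $\rho=1/r$, so that the curve $t\mapsto p_r(e^{it})$ is the image under $p$ of the circle $C_\rho=\{w:|w|=\rho\}$, and at each $w\in C_\rho$ with $p'(w)\ne0$ the tangent vector of this curve is a nonzero multiple of $iwp'(w)$. For $\mathcal E^Z_{m,n}$ the argument is immediate: $p_r\in\mathcal E^Z_{m,n}$ exactly when $p'$ vanishes at some point of $C_{1/r}$, and the hypothesis forces $p$ to be non-constant (every constant equals $q(z^2)$), so $p'$ is a nonzero Laurent polynomial with finitely many zeros in $\mathbb C\setminus\{0\}$; these lie on finitely many circles, so $p_r\in\mathcal E^Z_{m,n}$ for only finitely many $r$. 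The content of the proof is the corresponding statement for $\mathcal E^{NT}_{m,n}$.

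First I would show that $B=\{r>0:p_r\in\mathcal E^{NT}_{m,n}\}$ is semialgebraic. A self-intersection $\{z_1,z_2\}$ of $p_r$ fails to be transversal exactly when the tangent vectors of $t\mapsto p_r(e^{it})$ at $z_1$ and $z_2$ are linearly dependent over $\mathbb R$, so $B$ is the projection to the $r$-axis of the set of triples $(r,w_1,w_2)\in(0,\infty)\times\mathbb C^2$ with $|w_1|=|w_2|=1/r$, $w_1\ne w_2$, $p(w_1)=p(w_2)$, and $\im\big(w_1p'(w_1)\,\overline{w_2p'(w_2)}\big)=0$; writing $w_j=x_j+iy_j$ and using $|w_j|=1/r$ to clear negative powers, this set is cut out by polynomial equations and inequalities in $r,x_1,y_1,x_2,y_2$, so by the Tarski--Seidenberg theorem $B$ is semialgebraic, hence a finite union of points and open intervals. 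Suppose for contradiction that $B$ contains an interval. By semialgebraic triviality (Hardt's theorem), after passing to a subinterval $I$ we get real-analytic maps $\rho\mapsto w_1(\rho)$ and $\rho\mapsto w_2(\rho)$ on $I$ realizing such a triple for every $\rho\in I$; shrinking $I$, we may also assume $p'(w_j(\rho))\ne0$ there (otherwise $p'$ vanishes on a real-analytic arc, forcing $p'\equiv0$), so that $w_1p'(w_1)=\lambda(\rho)\,w_2p'(w_2)$ for a nowhere-zero, real-valued, real-analytic function $\lambda$ on $I$.

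Then comes the key differentiation. Differentiating $p(w_1(\rho))\equiv p(w_2(\rho))$ gives $p'(w_1)w_1'=p'(w_2)w_2'$, and combining this with $w_1p'(w_1)=\lambda w_2p'(w_2)$ yields $w_1'/w_1=\lambda^{-1}w_2'/w_2$. Writing $w_j(\rho)=\rho e^{i\theta_j(\rho)}$ with a real-analytic branch of $\theta_j$, this becomes $\rho^{-1}+i\theta_1'=\lambda^{-1}\big(\rho^{-1}+i\theta_2'\big)$; since $\lambda$ is real-valued, equating real parts forces $\lambda\equiv1$ on $I$, and then equating imaginary parts gives $\theta_1'\equiv\theta_2'$. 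Hence $w_1(\rho)/w_2(\rho)=e^{i(\theta_1-\theta_2)}\equiv\omega$ is constant with $\omega\ne1$, and $w_1p'(w_1)=w_2p'(w_2)$ becomes $\omega\,p'(\omega w_2(\rho))=p'(w_2(\rho))$ for all $\rho\in I$. Since $\rho\mapsto w_2(\rho)$ is a non-constant real-analytic arc, its image accumulates in $\mathbb C\setminus\{0\}$, so the identity theorem gives $\omega\,p'(\omega w)\equiv p'(w)$ on $\mathbb C\setminus\{0\}$; integrating and using $p(w_1(\rho_0))=p(w_2(\rho_0))$ for one $\rho_0\in I$ yields $p(\omega w)\equiv p(w)$. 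Comparing coefficients, $\omega^k=1$ whenever $a_k\ne0$; as $a_n\ne0$, $\omega$ is a root of unity of some order $j>1$, and $a_k\ne0$ implies $j\mid k$, so $p(z)=q(z^j)$ for a Laurent polynomial $q$, contradicting the hypothesis. Therefore $B$ contains no interval and is finite; combined with the first part, $p_r\notin\mathcal E^Z_{m,n}\cup\mathcal E^{NT}_{m,n}$ for all but finitely many $r$.

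I expect the main obstacle to be the second step --- turning the existential assertion ``$p_r$ has a non-transversal self-intersection'' into an honest real-analytic family $(w_1(\rho),w_2(\rho))$ as $\rho$ varies over an interval --- since the relations cannot be differentiated until the self-intersection is chosen to depend nicely on $\rho$; this is exactly what the semialgebraic structure and a triviality (or curve-selection) theorem provide. After that, the identity $\rho^{-1}+i\theta_1'=\lambda^{-1}(\rho^{-1}+i\theta_2')$ does the decisive work essentially for free, pinning down $\lambda\equiv1$ and the constancy of $w_1/w_2$, and the rest is the identity theorem and a coefficient comparison.
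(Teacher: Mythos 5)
Your proof is correct, but it takes a genuinely different route from the paper's. The paper argues circle by circle: it invokes Theorem~\ref{self-intersection-thm} to know that $p$ has finitely many self-intersections on all but at most one circle $\mathbb T_r$, restricts to an annulus where $p'\ne 0$, and proves that the set of non-transversal pairs is \emph{discrete} there. Its key device is local: near a hypothetical accumulation of non-transversal pairs one writes $\varphi=p_{|U_2}^{-1}\circ p_{|U_1}$ with $p\circ\varphi=p$, and the two real-analytic conditions $|\varphi(z)|=|z|$ and $z\varphi'(z)/\varphi(z)\in\mathbb R$ are shown to force an analytic arc lying on $\mathbb T_r$ itself (the hypothesis that $p$ is not of the form $q(z^j)$ enters through $\psi(z)=\varphi(z)/z$ being nonconstant), contradicting the finiteness of self-intersections on $\mathbb T_r$. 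You instead work transversally across radii: Tarski--Seidenberg makes the bad set $B$ of radii semialgebraic; definable choice (or Hardt triviality) together with piecewise analyticity of one-variable semialgebraic functions produces a real-analytic family $(w_1(\rho),w_2(\rho))$ of non-transversal self-intersections over an interval; and the identity $\rho^{-1}+i\theta_1'=\lambda^{-1}(\rho^{-1}+i\theta_2')$ with $\lambda$ real pins down $\lambda\equiv 1$ and $w_1/w_2\equiv\omega$, a root of unity, whence $p(\omega z)\equiv p(z)$ contradicts the hypothesis directly. Your route buys logical independence from Theorem~\ref{self-intersection-thm} (the paper's proof quotes it for the finiteness of self-intersections per circle) and yields a \emph{finite}, not merely discrete, exceptional set; its cost is the semialgebraic machinery, heavier than the paper's elementary complex-analytic argument, though all the results you quote are standard and applied correctly. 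One tiny point to smooth out: the step ``$a_n\ne 0$ makes $\omega$ a root of unity'' needs $n\ne 0$; in the degenerate case $n=0$ use instead any nonzero coefficient $a_k$ with $k\ne 0$, which exists because $p$ is nonconstant (constants being of the form $q(z^2)$ are excluded by hypothesis).
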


\begin{proof} Instead of rescaling the polynomial, we will consider the behavior of $p$ on the circles $\mathbb T_r = \{z \colon |z|=r\}$. 
Theorem~\ref{self-intersection-thm} implies that $p$ has finitely many self-intersections on such circles, with at most one exceptional value of $r$ (which arises when $m=-n$). We will focus on the values of $r$ such that $p'\ne 0$ on $\mathbb T_r$ and $p$ has finitely many self-intersections on $\mathbb T_r$. Let $A$ be a circular annulus such that $\mathbb T_r\subset A$ and $p'\ne 0$ on $A$. 

Non-transversal self-intersections of $\gamma_r$ correspond to pairs $(z_1, z_2)\in A^2$ such that $|z_1|=|z_2|=r$, $p(z_1)=p(z_2)$, and the ratio $\frac{z_1 p'(z_1)}{z_2 p'(z_2)}$ is real. Let $E_{NT}$ be the set of all such pairs. We claim that $E_{NT}$ is a discrete subset of $A^2$.

Suppose toward a contradiction that $(z_1, z_2)$ is a non-isolated point of $E_{NT}$. Since $p$ is locally invertible on $A$, there exist neighborhoods $U_k$ of $z_k$ and a conformal map $\varphi := p_{|U_2}^{-1}\circ p_{|U_1}$ such that $\varphi(z_1)=z_2$ and $p(\varphi(z))=p(z)$ for all $z\in U_1$. It follows that $p'(\varphi(z))= p'(z)/\varphi'(z)$. 

Any point of $E_{NT}\cap (U_1\times U_2)$ can be written as $(z, \varphi(z))$. Since 
\[
\frac{z p'(z)}{\varphi(z) p'(\varphi(z))} = \frac{z \varphi'(z)}{\varphi(z)}
\]
it follows that $z_1$ is a limit point of the set 
\[
W_1 := \left\{z\in U_1 \colon |\varphi(z)|=|z| \text{ and } 
\frac{z \varphi'(z)}{\varphi(z)} \in \mathbb R\right\}.
\]
Each of the two equations $|\varphi(z)/z| = 1$ and $\im \frac{z \varphi'(z)}{\varphi(z)} = 0$ describes a set that is locally a finite union of analytic curves. Since their intersection has an accumulation point $z_1$, it must contain some analytic curve $\Gamma$ passing through $z_1$. (Recall that when two analytic curves intersect at infinitely many points, they coincide.) Let $\Gamma'$ be its tangent vector.

Let $\psi(z)=\varphi(z)/z$. Note that $\psi$ is a nonconstant function, for otherwise $p$ would have rotational symmetry contrary to the assumption of the lemma. Since $\psi(\Gamma)\subset \mathbb T$, it follows that
\begin{equation}\label{eq-gamma-imag}
\re \left(\frac{\psi'(\Gamma) \Gamma'}{\psi(\Gamma)}\right) =  0 
\end{equation}
at every point of $\Gamma$. Using the identity $\varphi'(z)=\psi(z)+z\psi'(z)$ we find that
\[
\frac{z \varphi'(z)}{\varphi(z)} = 1 + \frac{z\psi'(z)}{\psi(z)}
\]
and therefore $z\psi'(z)/\psi(z)$ is real-valued on $\Gamma$. This simplifies~\eqref{eq-gamma-imag} to 
$\re (\Gamma'/\Gamma) =  0$ from where it follows that $\Gamma\subset \mathbb T_r$. But then $\phi(\Gamma)\subset \mathbb T_r$ as well, and the identity $p(\phi(z))=p(z)$ produces infinitely many self-intersection of $p$ on $\mathbb T_r$, a contradiction. 

We have proved that, outside of a discrete exceptional set of radii, the restriction of $p$ to $\mathbb T_r$ has nonzero derivative and only transversal self-intersections.   
\end{proof}

\begin{theorem}\label{thm-normal-dense} If $m<n$, then $\mathcal N_{m, n}$ is an open dense subset of $\mathcal P_{m, n}$. 
\end{theorem}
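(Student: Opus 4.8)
The plan is to combine the openness of $\mathcal N_{m,n}$ (already established in the text) with a density argument that successively removes the three obstructions to normality. Since $\mathcal N_{m,n}$ is open, it suffices to show that every $p \in \mathcal P_{m,n}$ can be approximated arbitrarily well by normal polynomials. The natural approach is to first reduce to the case where $p$ is not of the form $q(z^j)$ with $|j|>1$, then apply Lemma~\ref{lemma-change-radius} to escape $\mathcal E_{m,n}^Z \cup \mathcal E_{m,n}^{NT}$, and finally invoke Lemma~\ref{lemma-multiple-crossings} to escape $\mathcal E_{m,n}^{MC}$, taking care that the last perturbation does not reintroduce the earlier defects.

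First I would handle the rotational-symmetry obstruction. If $p(z) = q(z^j)$ with $|j|>1$, then $m,n$ are both divisible by $j$; a generic small perturbation of the coefficients $a_k$ destroys this symmetry while staying in $\mathcal P_{m,n}$, because the set of such symmetric polynomials is a proper linear subspace. (If no $j$ with $|j|>1$ divides $\gcd(m,n)$, there is nothing to do.) So, after an arbitrarily small perturbation, we may assume $p$ is not of this exceptional form. Now apply Lemma~\ref{lemma-change-radius}: for all but a discrete set of $r>0$, the rescaled polynomial $p_r(z) = p(r^{-1}z)$ lies in neither $\mathcal E_{m,n}^Z$ nor $\mathcal E_{m,n}^{NT}$; choosing $r$ close to $1$ gives a polynomial $\tilde p$, arbitrarily close to $p$, that is regular and has only transversal self-intersections on $\mathbb T$.

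The remaining step is to remove multiple crossings from $\tilde p$ without spoiling regularity or transversality. Here I would use that $\mathcal E_{m,n}^Z$ is closed and that $\mathcal E_{m,n}^{NT}$ is closed in $\mathcal P_{m,n} \setminus \mathcal E_{m,n}^Z$ (both noted in the text, the latter from~\cite[Lemma~1]{Titus}): since $\tilde p \notin \mathcal E_{m,n}^Z \cup \mathcal E_{m,n}^{NT}$, there is a neighborhood $V$ of $\tilde p$ in $\mathcal P_{m,n}$ that avoids both sets. By Lemma~\ref{lemma-multiple-crossings} the complement of $\mathcal E_{m,n}^{MC}$ is dense in $\mathcal P_{m,n}$, so we can pick $\hat p \in V$ with $\hat p \notin \mathcal E_{m,n}^{MC}$. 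Then $\hat p$ avoids all three exceptional sets, hence $\hat p \in \mathcal N_{m,n}$, and $\hat p$ can be taken arbitrarily close to the original $p$. This proves density, and together with openness completes the proof.

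The main obstacle is the interplay between the successive perturbations: each lemma removes one obstruction, but we must ensure a later perturbation does not undo an earlier gain. The key point that makes the argument go through cleanly is the topological structure already recorded before the statement — $\mathcal E_{m,n}^Z$ is closed and $\mathcal E_{m,n}^{NT}$ is relatively closed off $\mathcal E_{m,n}^Z$ — which lets us work inside a small neighborhood $V$ of $\tilde p$ where those two defects simply cannot occur, so the final density perturbation from Lemma~\ref{lemma-multiple-crossings} is unconstrained. One should also double-check that the symmetry-destroying perturbation in the first step keeps $a_m \neq 0$ and $a_n \neq 0$, which is automatic for small enough perturbations.
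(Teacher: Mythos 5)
Your proposal is correct and follows essentially the same route as the paper: perturb slightly to destroy any $q(z^j)$ structure, use Lemma~\ref{lemma-change-radius} with $r$ near $1$ to escape $\mathcal E_{m,n}^Z\cup\mathcal E_{m,n}^{NT}$ (which is closed), and then apply the density of the complement of $\mathcal E_{m,n}^{MC}$ from Lemma~\ref{lemma-multiple-crossings} inside a neighborhood avoiding the first two defects. The only cosmetic difference is the first step: the paper perturbs so that all coefficients $a_k$, $m\le k\le n$, are nonzero (which rules out the $q(z^j)$ form since consecutive exponents appear), while you perturb off the finite union of proper linear subspaces of $j$-symmetric polynomials — both are equally valid.
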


\begin{proof} Only density remains to be proved. Given any polynomial $\mathcal P_{m, n}$, we can perturb it to make sure all the coefficients $a_k$ for $m\le k\le n$ are nonzero, which ensures that the perturbed polynomial $p$ satisfies the assumption of Lemma~\ref{lemma-change-radius}.  
Choose $r$ close to $1$ so that the rescaled polynomial $p_r$ is not in $\mathcal E_{m, n}^Z\cup \mathcal E_{m, n}^{NT}$ which is a closed subset of $\mathcal P_{m, n}$. Lemma~\ref{lemma-multiple-crossings} implies that all polynomials in a sufficiently small neighborhood of $p_r$ are normal, except possibly for $p_r$ itself.  
\end{proof}

The following result generalizes Lemma 1.3 (2) from~\cite{Ishikawa} which concerns the special case $m=-n$. 

\begin{proposition}\label{normal-rotation} For every $p\in \mathcal P_{m, n}\setminus \mathcal E_{m, n}^{Z}$ the rotation number of the curve $\gamma(t)= p(e^{it})$ is between $m$ and $n$, and these bounds are sharp.      
\end{proposition}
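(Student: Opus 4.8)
The plan is to compute the rotation number directly from the tangent vector $\gamma'(t) = \frac{d}{dt} p(e^{it})$ and track how its winding number about $0$ can change within the class $\mathcal P_{m,n} \setminus \mathcal E_{m,n}^Z$. Write $\gamma'(t) = i e^{it} p'(e^{it})$, so the winding number of $\gamma'$ about $0$ equals $1$ plus the winding number of the curve $t \mapsto p'(e^{it})$ about $0$. Now $p'(z) = \sum_{k=m}^n k a_k z^{k-1} = z^{m-1}\bigl(\sum_{k=m}^n k a_k z^{k-m}\bigr)$, where the bracketed factor is an ordinary polynomial of degree $n-m$ with nonzero constant term $m a_m$ (here I use $m \neq 0$; the case $m=0$ reduces to $\mathcal P_{1,n}$ as noted after Theorem~\ref{self-intersection-thm}, and then the constant term is $a_1 \neq 0$ so the argument below applies with $m$ replaced by the true bottom degree). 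The winding number of $t \mapsto z^{m-1}$ is $m-1$, so the rotation number of $\gamma$ is $m + W$, where $W$ is the winding number of $t \mapsto h(e^{it})$ with $h(z) = \sum_{k=m}^n k a_k z^{k-m}$, an algebraic polynomial of degree $n-m$ that does not vanish on $\mathbb T$ (since $p \notin \mathcal E_{m,n}^Z$ forces $p' \neq 0$ on $\mathbb T$, hence $h \neq 0$ on $\mathbb T$).

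The next step is the standard fact that the winding number of $h(e^{it})$ about $0$, for a polynomial $h$ of degree $n-m$ with no zeros on $\mathbb T$, equals the number of zeros of $h$ (with multiplicity) inside the unit disk, which is an integer between $0$ and $n-m$. This is the argument principle. Therefore $W \in \{0, 1, \dots, n-m\}$, and consequently the rotation number $m + W$ lies in $\{m, m+1, \dots, n\}$, which is the asserted bound.

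For sharpness, I would exhibit, for each allowed value, an explicit polynomial: taking $h$ with all zeros outside $\overline{\mathbb D}$ gives $W = 0$ and rotation number $m$ — e.g. one can use $p(z) = z^m + \epsilon z^n$ for small $\epsilon$, or more carefully choose coefficients so $h(z) = \sum_{k=m}^n k a_k z^{k-m}$ has all roots large; at the other extreme $h$ with all zeros inside $\mathbb D$ gives $W = n-m$ and rotation number $n$ — for instance $p(z) = z^n + \epsilon z^m$ makes $h(z) = n z^{n-m} + \epsilon m$, all of whose roots have modulus $(\epsilon |m|/n)^{1/(n-m)} < 1$ for small $\epsilon$. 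One then checks these particular $p$ are not in $\mathcal E_{m,n}^Z$, i.e.\ $p' \neq 0$ on $\mathbb T$, which is immediate for small $\epsilon$. This already shows the extreme values $m$ and $n$ are attained; if intermediate values are also wanted one can split the roots of $h$ between inside and outside $\mathbb D$.

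The main obstacle is bookkeeping around the special roles of $m$: when $m = 0$ the polynomial $p' $ has bottom degree $1$, not $0$, so the naive factorization $z^{m-1}(\cdots)$ is off by one, and the reduction ``disregard the constant term'' from the introduction must be invoked cleanly so that the stated bounds $m \le \text{rotation number} \le n$ still read correctly (for $m=0$ the rotation number lies between $1$ and $n$, consistent with the reduced polynomial in $\mathcal P_{1,n}$, but also between $0$ and $n$). A secondary point requiring a line of justification is that $p \notin \mathcal E_{m,n}^Z$ is exactly the hypothesis ensuring $h$ has no zero on $\mathbb T$, so the argument-principle count is well-defined and the rotation number itself is defined (the curve is regular). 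Beyond that the proof is routine.
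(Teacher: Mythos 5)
Your proof is correct and follows essentially the same route as the paper: both compute the rotation number by applying the argument principle to the tangent vector $\gamma'(t)=\sum_{k=m}^n ik a_k e^{ikt}$ (the paper treats $f(z)=\sum ik a_k z^k$ as a rational function with a zero/pole of order $m$ at the origin, which is just another way of writing your factorization $p'(z)=z^{m-1}h(z)$), and both use the same sharpness examples $z^m+\epsilon z^n$ and $z^n+\epsilon z^m$. No substantive difference.
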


\begin{proof} The tangent vector $\gamma'$ can be written in terms of the coefficients~\eqref{Lpoly} as
$\gamma'(t) = \sum_{k=m}^n ika_k e^{ikt}$. By the argument principle, the winding number of $\gamma'$ about $0$ is the sum of orders of the zeros of the rational function $f(z) = \sum_{k=m}^n ika_k z^k$ in the unit disk, where we count poles as zeros of negative order. Clearly, $f$ has a zero of order $m$ at $0$ and it may have up to $n-m$ zeros elsewhere, namely the roots of the polynomial $\sum_{k=m}^n ika_k z^{k-m}$. Thus, the number of zeros of $f$ is between $m$ and $n$. 

The sharpness of both bounds is demonstrated by $p(z)=z^m + \epsilon z^n$ and $p(z)=\epsilon z^m + z^n$ where $\epsilon$ is sufficiently small.  
\end{proof}

Theorem~\ref{thm-whitney} and Proposition~\ref{normal-rotation} show that although the total number of crossings of a polynomial $p\in \mathcal N_{m, n}$ can be of order $\max(m^2, n^2)$ by Theorem~\ref{self-intersection-thm}, with a suitable choice of a base point we have $|N^+ - N^-|\le \max(|m|, |n|)+1$, indicating that the crossings are nearly balanced between positive and negative ones.

\section{Point preimages under generic polynomials}

Lemma~\ref{lemma-multiple-crossings} showed that for a generic polynomial, the image of the unit circle $\mathbb T$ has no multiple crossings. It is natural to ask whether a stronger statement is true. 

\begin{question}\label{question-doomed} Is it true that for a generic polynomial $p\in \mathbb P_{m, n}$  all curves $p(\mathbb T_r)$, $r>0$, have no multiple crossings?
\end{question}

An equivalent way to state the property in Question~\ref{question-doomed} is: for every $w\in \mathbb C$ and every $r>0$ the set 
$p^{-1}(w)\cap \mathbb T_r$ has at most two elements. The following lemma offers some hope of controlling all preimages $p^{-1}(w)$ at once, at least by bounding their cardinality. 

\begin{lemma}  For a generic polynomial $p\in \mathcal P_n$ the preimages $p^{-1}(w)$ have at least $n-1$ elements for every $w\in \mathbb C$.     
\end{lemma}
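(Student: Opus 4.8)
The plan is to show that the "bad" polynomials—those $p\in\mathcal P_n$ for which some fiber $p^{-1}(w)$ has $n-2$ or fewer points—form a set of measure zero (or a proper algebraic subvariety, hence nowhere dense) in $\mathcal P_n$. A degree-$n$ polynomial $p$ has $\#p^{-1}(w) \le n-2$ for some $w$ precisely when $p - w$ has at least two multiple roots, or one root of multiplicity $\ge 3$; equivalently, the partition of $n$ recording the root multiplicities of $p-w$ is a proper refinement coarser than $(2,1,\dots,1)$. So the first step is to stratify $\mathbb C$ (the $w$-line) by the multiplicity type of $p-w$: for generic $w$ the fiber has $n$ points, and $w$ lies in the critical value set only at finitely many points; among those, the condition "$p-w$ has two multiple roots, or a triple root" is what we must exclude for \emph{all} $w$ simultaneously.

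The key computation is a parameter count. First I would handle a single multiple root of order $\ge 3$: the locus of $p\in\mathcal P_n$ such that $p''$ and $p'$ have a common root (so $p-w$ has a triple root for the corresponding $w$) is the vanishing of the resultant $\operatorname{Res}(p',p'')$, a nonzero polynomial in the coefficients of $p$—nonzero because, e.g., $p(z)=z^n$ has $p'=nz^{n-1}$, $p''=n(n-1)z^{n-2}$ with a common root at $0$ but a small perturbation like $z^n + z$ has $p'=nz^{n-1}+1$ with simple roots and $p''$ a nonzero monomial times $z^{n-2}$, sharing no root. Hence this is a proper subvariety. Second, the locus where $p-w$ has (at least) two distinct double roots $\zeta_1\ne\zeta_2$: parametrize such $p$ by $(w,\zeta_1,\zeta_2,q)$ via $p(z) = w + (z-\zeta_1)^2(z-\zeta_2)^2 q(z)$ with $q\in\mathcal P_{n-4}$ (so $\deg q = n-4$, contributing $n-3$ complex parameters), giving a smooth map whose source has complex dimension $1 + 1 + 1 + (n-3) = n$ onto the target $\mathcal P_n$ of complex dimension $n+1$ (coefficients $a_0,\dots,a_{n-1}$ free, $a_n$ a fixed nonzero scalar—or $n+2$ if we do not normalize, in which case the source is $\mathbb C^\times \times$ the above, still one dimension short). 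Either way the image is a measure-zero set. Taking the union over the finitely many combinatorial types of "defective" multiplicity partitions yields a measure-zero exceptional set, and its complement is the desired generic (indeed open dense, since each piece is contained in a proper subvariety) set.

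The main obstacle I anticipate is \emph{uniformity in $w$}: the naive argument bounds, for \emph{fixed} $w$, the polynomials with a defective fiber over that $w$, but we need to exclude $p$ having a defective fiber over \emph{some} $w$, and $w$ ranges over all of $\mathbb C$. The resolution is exactly the parametrizations above, where $w$ is absorbed as one of the coordinates on the source space—this is why the dimension count must include $w$ as a free parameter, and why it still comes out one short of $\dim_{\mathbb C}\mathcal P_n$. A secondary point to get right is that $q$ in the factorization $p = w + (z-\zeta_1)^2(z-\zeta_2)^2q$ must itself have degree exactly $n-4$ with nonzero leading coefficient (matching $a_n\ne 0$); this is a codimension-zero open condition on $q$ and does not affect the count. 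Finally one should note that the case of a single multiple root of order exactly $2$ is \emph{not} excluded—it is allowed, since then $\#p^{-1}(w) = n-1$—so only "two or more coincidences' worth" of degeneracy is forbidden, which is precisely what the triple-root and double-double-root loci capture.
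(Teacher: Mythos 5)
Your argument is correct in substance, but it takes a genuinely different route from the paper. You stratify the exceptional set by degeneration type: a fiber with at most $n-2$ points forces $p-w$ to have either a root of multiplicity at least three, which you detect via the vanishing of $\operatorname{Res}(p',p'')$ (a not-identically-zero polynomial in the coefficients, as $z^n+z$ shows, and a genuine common-root criterion here because the leading coefficients of $p'$ and $p''$ never vanish on $\mathcal P_n$), or two distinct double roots, which you absorb into the parametrization $p=w+(z-\zeta_1)^2(z-\zeta_2)^2q(z)$ and eliminate by a dimension count --- precisely the technique the paper itself uses for $\mathcal E_{m,n}^{MC}$ in Lemma~\ref{lemma-multiple-crossings}. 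The paper argues differently: it normalizes $p(0)=0$ with $p'$ monic, so that $p$ is determined by its critical points, and invokes the result of Beardon, Carne, and Ng that for an open dense set of critical-point vectors the critical values are distinct (and nonzero); distinct critical values immediately force every fiber to have at least $n-1$ points. Your version is elementary and self-contained, while the paper's is shorter given the citation and yields slightly more (distinct \emph{nonzero} critical values). Three small points to tighten: the parenthetical bookkeeping of $\dim_{\mathbb C}\mathcal P_n$ is off by one (with $a_n$ fixed the dimension is $n$; with $a_n$ free and nonzero it is $n+1$), though the count you actually use --- source of dimension $n$ against target of dimension $n+1$ --- is the correct unnormalized one; to upgrade ``measure zero'' to ``open dense complement'' for the two-double-root piece you should either note that the image of your polynomial map is constructible with Zariski closure a proper subvariety, or verify directly that the full bad locus is closed in $\mathcal P_n$ (its witnesses are zeros of $p'$, which depend continuously on $p$ and remain bounded); and the two-double-root stratum implicitly requires $n\ge 4$, the cases $n\le 3$ being vacuous or trivial there.
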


\begin{proof} The desired property of $p$ is preserved if $p$ is replaced by $\alpha p + \beta$ where $\alpha, \beta\in \mathbb C$ and $\alpha\ne 0$. Therefore, we can normalize $p$ so that $p(0)=0$ and $p'$ is monic. Such polynomials are uniquely determined by their critical points $u_1, \dots, u_{n-1}$ (listed with multiplicity).

Beardon, Carne, and Ng~\cite{BeardonCarneNg} studied the map $\theta\colon \mathbb C^{n-1}\to \mathbb C^{n-1}$ which sends each vector $(u_1, \dots, u_{n-1})\in \mathbb C^{n-1}$ to $(v_1, \dots, v_{n-1})$ where 
\[
v_k = \int_{0}^{u_k} (\zeta-u_1)\cdots (\zeta-u_{n-1})\,d\zeta.
\]
If $u_1, \dots, u_{n-1}$ are the critical points of a polynomial $p$ (with the above normalization) then $v_1, \dots, v_{n-1}$ are its critical values. It is shown in~\cite[p. 348]{BeardonCarneNg} that there is an open dense subset $\mathcal U\subset \mathbb C^{n-1}$ such that $\theta(u)$ has distinct nonzero entries for each $u\in \mathcal U$. Therefore, the critical values of a generic polynomial are distinct. 

If $p$ has $n-1$ distinct critical values, then for every $w\in \mathbb C$ the polynomial $p-w$ has at most one multiple root, in which case that root has multiplicity two. It follows that $p^{-1}(w)$ has at least $n-1$ elements.     
\end{proof}

Despite the above, Question~\ref{question-doomed} has a negative answer. 

\begin{proposition}\label{prop-answer-no}
If $p(z)=\sum_{k=0}^3 a_k z^k$ where $0<|a_1|^2 < |a_2a_3|$, then there exists $w\in \mathbb C$ such that $p^{-1}(w)$ consists of three distinct numbers with the same modulus.     
\end{proposition}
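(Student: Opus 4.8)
The plan is to decide exactly when a fiber $p^{-1}(w)$ can be a three-point set lying on a single circle $\{|z|=r\}$, and then to force the hypothesis to produce such a configuration. Write $q(z)=p(z)-w=a_3z^3+a_2z^2+a_1z+(a_0-w)$; a fiber of the required type exists precisely when some $q$ has all three of its (distinct) zeros on a circle $\{|z|=r\}$. That happens if and only if $q$ is \emph{self-inversive with respect to that circle}, meaning that the involution $\iota(z)=r^2/\bar z$, which fixes the circle pointwise, permutes the zeros of $q$. I would impose this by setting $q^{\sharp}(z):=z^3\,\overline{q(r^2/\bar z)}$ and requiring $q^{\sharp}=\lambda q$ for a constant $\lambda$. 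Comparing the coefficients of $z^2$ and $z$ gives $\bar a_1 r^2/a_2=\lambda=\bar a_2 r^4/a_1$, which forces $r=|a_1|/|a_2|$ and $\lambda=\bar a_1 r^2/a_2$, while the outer two coefficients then determine $w$ uniquely. Thus there is a single candidate pair $(r,w)$, and the entire problem reduces to showing that this forced self-inversive cubic has three \emph{distinct} zeros on $\{|z|=r\}$.

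Next I would use that a self-inversive polynomial of odd degree always has a zero on the circle: its zeros off the circle come in genuine $\iota$-pairs (one inside, one outside), so an odd number of them must be fixed points of $\iota$, i.e.\ must lie on $\{|z|=r\}$. Picking such a zero $z_0$ and factoring $q=a_3(z-z_0)h(z)$, the quadratic factor $h$ inherits self-inversiveness, so its two zeros form an $\iota$-symmetric pair. After normalizing $h(z)=z^2+bz+c$, self-inversiveness gives $|c|=r^2$, and a short computation shows that the two zeros lie on $\{|z|=r\}$ exactly when $|b|\le 2r$: in the straddling case the zeros are $\rho e^{i\mu}$ and $(r^2/\rho)e^{i\mu}$ with $|b|=\rho+r^2/\rho\ge 2r$, while on the circle the zeros are $re^{i\mu},re^{i\nu}$ with $|b|=r\,|e^{i\mu}+e^{i\nu}|\le 2r$. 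So the remaining task is the algebraic one of expressing $b$ (equivalently $z_0$) through $a_1,a_2,a_3$ and verifying the strict inequality $|b|<2r$.

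The final step is to derive this strict inequality, together with the disjointness of $z_0$ from the two zeros of $h$, from the hypothesis $0<|a_1|^2<|a_2a_3|$; strictness is what simultaneously forces the three zeros to be pairwise distinct, since every coincidence among them is a boundary case ($|b|=2r$ or $h(z_0)=0$). I expect this estimate to be the main obstacle. It is equivalent to the classical fact that a self-inversive polynomial has all its zeros on the circle iff every zero of its derivative lies in the closed disk; after rescaling to the unit circle and writing $f(\zeta)=\zeta^3-S\zeta^2+P\zeta-Q$ with $|Q|=1$, $P=\bar S Q$, and $S=-a_2|a_2|/(a_3|a_1|)$, the zeros of $f'$ are $\tfrac13\bigl(S\pm\sqrt{S^2-3P}\bigr)$, and the work is to show that the hypothesis confines $S$ to the region where both of these lie in the closed unit disk. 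Carrying out this confinement — rather than the structural reductions above, which are routine — is where the specific strength of the assumption $|a_1|^2<|a_2a_3|$ must enter, and it is the crux of the proof.
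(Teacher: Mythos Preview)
Your reduction is essentially the paper's. The paper also normalizes to a self-inversive cubic on the unit circle, via the substitution $z\mapsto\beta z$ with $|\beta|=|a_1|/|a_2|$ (your $r$) together with a scalar $\alpha$ chosen so that $\alpha\beta^3 a_3=1$ and $\alpha\beta^2 a_2=-\overline{\alpha\beta a_1}$; this produces $z^3-\zeta z^2+\bar\zeta z+c$, and the choice of $w$ is exactly what shifts $c$ to $-1$. So your ``unique candidate $(r,w)$'' and the paper's $(\beta,\alpha)$ encode the same data, and both arguments arrive at the same self-inversive cubic $z^3-Sz^2+\bar S z-1$.

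Where you leave the gap --- showing this cubic has three \emph{distinct} zeros on $\mathbb T$ --- is precisely what the paper isolates as a separate lemma, and it proves it by a short winding-number argument rather than the Cohn/derivative route you propose. The curves $\gamma_r(t)=2re^{it}+e^{-2it}$ have winding number $-2$ about any $|\zeta|<1$ at $r=0$ and winding number $1$ at $r=1$; hence some $\gamma_r$ passes through $\zeta$, which unpacks to $\zeta_1=e^{-2it}$, $\zeta_{2,3}=(r\pm i\sqrt{1-r^2})e^{it}$ with $\sum\zeta_j=\zeta$ and $\prod\zeta_j=1$. Distinctness is free: if two of three unimodular numbers coincide, the triangle inequality forces their sum to have modulus at least $1$. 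This is considerably cleaner than locating the zeros $(S\pm\sqrt{S^2-3\bar S})/3$ of $f'$.

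One further remark explains why your ``crux'' resisted attack. Your own computation gives $|S|=|a_2|^2/(|a_1|\,|a_3|)$, and the stated hypothesis $|a_1|^2<|a_2a_3|$ does \emph{not} bound this quantity: with $a_1=a_3=1$, $a_0=0$, and $|a_2|$ large, the sum of any three points on $|z|=r=|a_1|/|a_2|$ has modulus at most $3r<1$, yet the sum of the zeros of $p-w$ is $-a_2/a_3$, so no fiber can lie on a single circle. The paper's own normalization step actually requires $|\zeta|<1$, i.e.\ $|a_2|^2<|a_1a_3|$, and its claim that the hypothesis is preserved under $p\mapsto\alpha p(\beta z)$ holds for $|a_2|^2<|a_1a_3|$ (both sides scale by $|\alpha|^2|\beta|^4$) but not for $|a_1|^2<|a_2a_3|$. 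Under the corrected inequality you get $|S|<1$ immediately, and then either your Cohn-criterion plan or the paper's winding argument finishes the proof.
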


\begin{lemma}\label{lemma-three-numbers} If $|\zeta|<1$, then there exist distinct unimodular numbers $\zeta_1, \zeta_2, \zeta_3$ such that $\zeta_1+\zeta_2+\zeta_3 = \zeta$ and $\zeta_1 \zeta_2 \zeta_3 = 1$.     
\end{lemma}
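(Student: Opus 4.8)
The plan is to parameterize the unimodular numbers whose product is $1$ and see that their sum sweeps out the open unit disk. Write $\zeta_1 = e^{i\alpha}$, $\zeta_2 = e^{i\beta}$, $\zeta_3 = e^{-i(\alpha+\beta)}$, so the constraint $\zeta_1\zeta_2\zeta_3 = 1$ is automatic and the three numbers are distinct precisely when $\alpha$, $\beta$, $-(\alpha+\beta)$ are pairwise distinct modulo $2\pi$. The task is then to show that the image of the map $F(\alpha,\beta) = e^{i\alpha} + e^{i\beta} + e^{-i(\alpha+\beta)}$ contains the open disk $D(0,1)$. First I would observe that $F(0,0) = 3$ lies outside, while a symmetric choice such as $\alpha = 2\pi/3$, $\beta = 4\pi/3$ (i.e. the cube roots of unity) gives $F = 0$; so $0$ is attained and by continuity a neighborhood of $0$ is covered, but we need the full open disk.

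The cleaner route is to work with the elementary symmetric functions directly. A triple of unimodular numbers $\zeta_1,\zeta_2,\zeta_3$ with $\zeta_1\zeta_2\zeta_3 = 1$ is exactly the root set of a polynomial $z^3 - \zeta z^2 + \overline{\zeta} z - 1$ for some $\zeta \in \mathbb C$: indeed, the product being $1$ forces the constant term, and the unimodularity forces the self-inversive symmetry $e_2 = \overline{e_1}$ (if the roots lie on $\mathbb T$ then $\overline{\zeta_j} = \zeta_j^{-1}$, so $\overline{e_1} = \sum \zeta_j^{-1} = e_2/e_3 = e_2$). Conversely, I would invoke the classical criterion for a self-inversive cubic $z^3 - \zeta z^2 + \overline{\zeta}z - 1$ to have all roots on $\mathbb T$: this holds exactly on a region of the $\zeta$-plane, and I claim the open disk $|\zeta| < 1$ (or some explicitly described subregion containing it) lies inside that region with the extra property that the roots are then distinct. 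So the key step is: \emph{for $|\zeta| < 1$, all three roots of $z^3 - \zeta z^2 + \overline{\zeta} z - 1$ are unimodular and distinct.}

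To prove that key step I would substitute $z = e^{i\theta}$ and reduce the existence of three distinct unimodular roots to a real trigonometric equation. Dividing $z^3 - \zeta z^2 + \overline{\zeta}z - 1 = 0$ by $z^{3/2}$ gives $(z^{3/2} - z^{-3/2}) - \zeta z^{1/2} + \overline{\zeta}z^{-1/2} = 0$; writing $z = e^{i\theta}$ and $\zeta = \rho e^{i\phi}$ this becomes $2i\sin(3\theta/2) - 2i\rho\sin(\theta/2 - \phi) = 0$, i.e. $\sin(3\theta/2) = \rho\sin(\theta/2 - \phi)$. Since $|\rho| < 1$, a counting argument on the number of solutions of $\sin(3\theta/2) = \rho\sin(\theta/2 - \phi)$ over a period of length $4\pi$ (using that the left side oscillates three times as fast and the right side has amplitude strictly less than $1$) shows there are exactly three solutions modulo $2\pi$, and they are distinct because equality of two roots would force a double root, which in turn forces $\zeta$ onto the discriminant locus, disjoint from $|\zeta|<1$. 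Then $\zeta_1 + \zeta_2 + \zeta_3 = e_1 = \zeta$, completing the proof.

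The main obstacle I anticipate is the root-counting step: one must rule out that the curve $\rho\sin(\theta/2-\phi)$ is tangent to or crosses $\sin(3\theta/2)$ in a way that produces fewer than three, or coincident, unimodular roots. The safest way to handle this rigorously is probably not the trigonometric count but a degree/continuity argument: the set of $\zeta$ for which the cubic is self-inversive-with-unimodular-roots is closed, contains $\zeta = 0$ (roots are the cube roots of unity, distinct), and its boundary is contained in the discriminant locus $\{\zeta : \operatorname{disc} = 0\}$; since this locus does not meet the open unit disk (which can be checked by a direct computation of the discriminant of $z^3 - \zeta z^2 + \overline\zeta z - 1$ and estimating its modulus from below for $|\zeta|<1$), the whole open disk lies in the interior of the good set, giving distinct unimodular roots throughout. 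That discriminant estimate is the one genuinely computational point, and I would isolate it as the crux.
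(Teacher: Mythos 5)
Your route is correct in outline but genuinely different from the paper's, which argues directly with a parameterization: for the family $\gamma_r(t)=2re^{it}+e^{-2it}$, $0\le r\le 1$, the winding number about $\zeta$ jumps from $-2$ at $r=0$ to $1$ at $r=1$, so some $\gamma_r$ passes through $\zeta$, and the triple is then written down explicitly as $\zeta_1=e^{-2it}$, $\zeta_{2,3}=(r\pm i\sqrt{1-r^2})e^{it}$, with distinctness coming from the observation that two coincident unimodular numbers force the sum to have modulus at least $1$. You instead work at the level of symmetric functions, reducing the lemma to the statement that the self-inversive cubic $P_\zeta(z)=z^3-\zeta z^2+\bar\zeta z-1$ has three distinct unimodular roots for $|\zeta|<1$, proved by a closed/open/connectedness argument off the discriminant locus; this works and buys a slightly stronger statement (a description of the unimodularity region of $P_\zeta$), but two steps you leave open need to be filled. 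First, ``the boundary of the good set lies in the discriminant locus'' is not automatic --- for a general family a simple root can leave $\mathbb T$ without any collision --- it needs the inversion symmetry you already noted: roots of $P_\zeta$ off $\mathbb T$ come in pairs $a,\,1/\bar a$, so if a simple unimodular root drifted off the circle under a small perturbation of $\zeta$, two roots of the perturbed cubic would lie near one simple root of the original, which is impossible; that sentence should be added. Second, the discriminant estimate you isolate as the crux does go through: $\mathrm{disc}\,P_\zeta=|\zeta|^4+18|\zeta|^2-8\re(\zeta^3)-27\le |\zeta|^4+8|\zeta|^3+18|\zeta|^2-27<0$ for $|\zeta|<1$; alternatively you can avoid the computation altogether, since by the same pairing argument a double root of $P_\zeta$ must be unimodular, say at $a$, forcing $\zeta=2a+\bar a^{2}$ and $|\zeta|=|2a^{3}+1|\ge 1$ --- which is precisely the paper's distinctness observation reused. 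Finally, your trigonometric count is also salvageable without any tangency analysis: the difference of the two sides of your real equation alternates in sign at the six points of a $4\pi$-period where $\sin(3\theta/2)=\pm1$ (because the other side has amplitude $\rho<1$), giving at least six zeros of $\theta$ and hence at least three distinct unimodular roots; since a cubic has only three roots, they are automatically simple, so ``at least three'' already finishes the argument and the exact count is not needed.
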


\begin{proof}
For $0\le r\le 1$ consider the curves $\gamma_r(t) = 2re^{it} + e^{-2it}$, $0\le t\le 2\pi$. 
The winding numbers of $\gamma_0$ and $\gamma_1$ about $\zeta$ are $-2$ and $1$, respectively. Therefore, there exists $r\in (0, 1)$ and $t\in [0, 2\pi]$ such that $\zeta = 2re^{it} + e^{-2it}$. The numbers
\[
\zeta_1 = e^{-2it},\quad 
\zeta_2, \zeta_3 = (r\pm i \sqrt{1-r^2})e^{it}
\]
satisfy $\zeta_1+\zeta_2+\zeta_3 = \zeta$ and $\zeta_1 \zeta_2 \zeta_3 = 1$. Finally, note that if two of three unimodular numbers coincide, their sum is at least $1$ in the absolute value. Hence $\zeta_1, \zeta_2, \zeta_3$ are distinct.
\end{proof}

\begin{proof}[Proof of Proposition~\ref{prop-answer-no}] The claimed property of $p$ is preserved if it is replaced by $\alpha p(\beta z)$ with $\alpha, \beta \ne 0$. The condition $0<|a_1|^2 < |a_2a_3|$ is preserved as well. 

We choose $\alpha$ and $\beta$ so that 
\begin{equation}\label{eq-alpha-beta}
\alpha \beta^3 a_3 = 1 \quad \text{and}  \quad   
\alpha \beta^2 a_2 = - \overline{\alpha \beta a_1}. 
\end{equation}
Indeed, the system~\eqref{eq-alpha-beta} can be easily solved for $|\alpha|$ and $|\beta|$ and separately for $\arg \alpha$ and $\arg \beta$. 
We thus obtain 
\[
\alpha p(\beta z) = z^3 - \zeta z^2 + \bar \zeta z + c
\]
where $|\zeta|<1$. Let $\zeta_1, \zeta_2, \zeta_3$ be the numbers provided by Lemma~\ref{lemma-three-numbers}. It follows that 
\[
\alpha p(\beta z) = (z-\zeta_1)(z-\zeta_2)(z-\zeta_3) + c + 1 
\]
which proves the proposition. 
\end{proof}

\section{Self-intersections of rational functions} 

Theorem~\ref{self-intersection-thm} concerns the images of $\mathbb T$ under Laurent polynomials, which form a narrow subset of rational functions. One may ask about the images of $\mathbb T$ under general rational functions. It turns out that this approach does not uncover interesting phenomena such as the interaction of $m$ and $n$ in the bound~\eqref{upper-bound-thm}. Instead we return to Quine's bound from~\cite{Quine73} but with a less clear picture of the exceptional cases. 

Indeed, consider a rational function 
\begin{equation}\label{eq-rational-polys}
r(z) = \frac{p(z)}{q(z)} \quad \text{ where }    
p(z)=\sum_{k=0}^n a_k z^k \text{ and }  q(z)=\sum_{k=0}^n b_\ell z^\ell.
\end{equation}
Assume that the function $r$ is not constant and is not a Blaschke product.

By applying a suitable M\"obius transformation, we can make sure that the coefficients $a_0, b_0, a_n, b_n$ are nonzero and satisfy $a_n b_0 - a_0 b_n\ne 0$. Indeed, we can choose $\zeta\in \mathbb D$ such that $r(\zeta)$ and $r(1/\bar \zeta)$ are distinct nonzero complex numbers. Consider the M\"obius transformation $\phi(z)=(z+\zeta)/(1+\bar \zeta z)$. 
Replacing $r$ with the composition $r\circ \phi$ does not change the image of $\mathbb T$. 
For the new function, $r(0)$ and $r(\infty)$ are distinct elements of $\mathbb C\setminus\{0\}$. 
Since the coefficients in~\eqref{eq-rational-polys} satisfy $a_0/b_0 = r(0)$ and 
$a_n/b_n = r(\infty)$, the claim follows. 

We have 
\[
r(e^{i\theta}z) - r(e^{-i\theta}z)
= \frac{p(e^{i\theta}z)q(e^{-i\theta}z) - p(e^{-i\theta}z)q(e^{i\theta}z)  }{q(e^{i\theta}z) q(e^{-i\theta}z)}
\]
where the numerator has degree at most $2n-1$ and is divisible by $z$. Factoring out $z$, we get the polynomial 
\[
\sum_{k,\ell=0}^n a_k b_\ell \left(e^{i(k-\ell)}-e^{i(\ell-k)}\right) z^{k+\ell-1}.
\]
Dividing by $e^{i\theta}-e^{-i\theta}$ yields
\begin{equation}\label{eq-rational-gpoly}
g(t, z) = 
\sum_{k,\ell=0}^n a_k b_\ell U_{k-\ell-1}(t) z^{k+\ell-1}
\end{equation}
where $t=\cos \theta$ as earlier. The total degree of $g$ is $2n-2$ and its degree with respect to $t$ is $n-1$, because $g$ can be written as 
\[
g(t, z) = 2^{n-1}  (a_n b_0 - a_0 b_n) t^{n-1} z^{n-1} + \text{(lower order terms in $t$)}
\] 
Hence, the homogenization of $g$, denoted $G(t, z, w)$, vanishes at $(t, z, w) = (1, 0, 0)$ with order $n-1$. 

The conjugate-reciprocal polynomial
\begin{equation}\label{eq-rational-gstar}
g^*(t, z) = 
\sum_{k,\ell=0}^n \overline{a_k b_\ell} U_{k-\ell-1}(t) z^{2n-k-\ell-1}
\end{equation}
contains the term 
\[
2^{n-1} (\overline{a_n b_0 - a_0 b_n}) t^{n-1} z^{n-1} 
\]
and all other terms contain $t$ to a power less than $n-1$. Thus, its homogenization $G^*$ also vanishes at $(1, 0, 0)$ with order $n-1$. 

Moreover, both algebraic curves have $z=0$ as a tangent line of multiplicity $n-1$ at $(1, 0, 0)$. This means they have $(n-1)^2$ pairs of coincident tangent lines. By Lemma~\ref{lemma-with-tangents}
\[
I_{(1, 0, 0)}(G, G^*) \ge (n-1)^2 + (n-1)^2
= 2(n-1)^2. 
\]
Since $\deg g \deg g^*=4(n-1)^2$, Bezout's theorem shows that $g=0$ and $g^*=0$ have at most $4(n-1)^2 - 2(n-1)^2 = 2(n-1)^2$ common points in $\mathbb C^2$, unless they have a common component. Using Lemma~\ref{self-intersections} we arrive at the following conclusion. 

\begin{proposition} The rational function~\eqref{eq-rational-polys} has at most $(n-1)^2$ self-intersections on $\mathbb T$ unless the polynomials $g$ and $g^*$ from~\eqref{eq-rational-gpoly} and~\eqref{eq-rational-gstar} have a nontrivial common factor.     
\end{proposition}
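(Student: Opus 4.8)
The plan is to assemble the pieces that have already been established in the paragraphs leading up to the statement, so the ``proof'' is really a matter of organizing the bookkeeping. First I would recall the setup: given the rational function $r=p/q$ in the normalized form with $a_0,b_0,a_n,b_n$ nonzero and $a_nb_0-a_0b_n\ne 0$, we have the auxiliary polynomials $g$ and $g^*$ from~\eqref{eq-rational-gpoly} and~\eqref{eq-rational-gstar}, each of total degree $2n-2$. As computed above, their homogenizations $G$ and $G^*$ both vanish at the point $(1,0,0)\in\mathbb{CP}^2$ to order exactly $n-1$, and in both cases the unique tangent ``cone'' there is the line $z=0$ taken with multiplicity $n-1$; hence the two curves share $n-1$ tangent lines at $(1,0,0)$ counted with multiplicity, in fact the same tangent line $(n-1)$ times.

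The second step is to invoke Lemma~\ref{lemma-with-tangents}. Assuming $g$ and $g^*$ have no common factor (this is precisely the hypothesis we are allowed to assume away), Lemma~\ref{lemma-with-tangents} applies with $a=b=n-1$ and $c=n-1$, giving
\[
I_{(1,0,0)}(G,G^*)\ \ge\ (n-1)^2+(n-1) .
\]
Wait --- I should be careful here: the displayed computation in the excerpt writes $I_{(1,0,0)}(G,G^*)\ge (n-1)^2+(n-1)^2$, using $c=(n-1)^2$ ``coincident tangent lines.'' The correct reading of Lemma~\ref{lemma-with-tangents} is that $c$ counts common tangent lines with multiplicity, and since both curves have the single line $z=0$ as a tangent of multiplicity $n-1$, the number of common tangents counted with multiplicity is $n-1$, not $(n-1)^2$. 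So the honest bound from Lemma~\ref{lemma-with-tangents} alone is $I_{(1,0,0)}(G,G^*)\ge (n-1)^2+(n-1)$, which already suffices: by B\'ezout, $g=0$ and $g^*=0$ meet in at most $\deg g\,\deg g^*=4(n-1)^2$ points counted with multiplicity, so the number of common points away from $(1,0,0)$ is at most $4(n-1)^2-(n-1)^2-(n-1)=3(n-1)^2-(n-1)$. This is weaker than the claimed $2(n-1)^2$, so to reach the stated bound one genuinely needs a sharper local analysis than Lemma~\ref{lemma-with-tangents} as quoted --- either a second singular point or a more refined intersection-multiplicity estimate. I would therefore look for the analogue of the point $(0,1,0)$ that appeared in the Laurent-polynomial argument of Section~3: examine the behavior of $G$ and $G^*$ on the line $w=0$ at $(0,1,0)$, or examine the finite points on $z=0$, to extract the additional $(n-1)^2$ of excess intersection multiplicity. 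This is the step I expect to be the main obstacle, and the place where the excerpt's displayed inequality is doing more work than Lemma~\ref{lemma-with-tangents} literally supplies.

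Once the bound $I_{(1,0,0)}(G,G^*)+(\text{contributions at other base points})\ge 2(n-1)^2$ is in hand, the conclusion is immediate: B\'ezout's theorem (valid because $g,g^*$ have no common component, which is the standing hypothesis) gives at most $4(n-1)^2-2(n-1)^2=2(n-1)^2$ common solutions $(t,z)\in\mathbb C^2$, and by Lemma~\ref{self-intersections} each self-intersection of $r$ on $\mathbb T$ produces two distinct such common solutions $(t,z)$ and $(-t,-z)$ with $|z|=1$. Dividing by $2$ yields at most $(n-1)^2$ self-intersections, which is exactly Quine's bound and completes the proof. The only case left genuinely open by this argument --- and the reason the proposition is phrased with an ``unless'' --- is the possibility that $g$ and $g^*$ share a nontrivial factor, in which case B\'ezout does not apply and the geometry of $r(\mathbb T)$ can be degenerate; I would simply flag this as the exceptional case rather than attempt to analyze it, consistent with the remark that the picture of exceptional cases is ``less clear'' than in the Laurent-polynomial setting.
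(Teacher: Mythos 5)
Your reading of the key step is sharper than the paper's own wording, and your objection is legitimate: under the only interpretation of Lemma~\ref{lemma-with-tangents} that makes it a true statement, ``common tangents counted with multiplicities'' cannot mean ``pairs of coincident tangent lines.'' (Take $f=z^2-w^3$, $g=z^2+w^3$: both have multiplicity $2$ and tangent cone $z^2$, the pairs count would predict intersection multiplicity at least $8$, but the true value is $6$.) So the lemma, applied literally at $(1,0,0)$ with $a=b=n-1$ and the single common tangent $z=0$, yields only $(n-1)^2+(n-1)$, exactly as you say, and the paper's displayed inequality is not justified by the lemma alone. However, your proposal then stops short of a proof: you never establish the bound $I_{(1,0,0)}(G,G^*)\ge 2(n-1)^2$ that the B\'ezout count requires, and the two places you propose to find the missing multiplicity are dead ends. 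On the line $w=0$ one has $G=z^{n-1}P(t,z)$ and $G^*=z^{n-1}P^*(t,z)$ with $P(1,0)\ne 0\ne P^*(1,0)$, so generically $G$ and $G^*$ do not even vanish simultaneously at $(0,1,0)$; and $g(t,0)=a_1b_0-a_0b_1$ is a constant, so there are no forced intersections along $z=0$ either. The missing multiplicity sits at $(1,0,0)$ itself.

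It can be extracted from the specific structure of $H(z,w)=G(1,z,w)$ and $H^*(z,w)=G^*(1,z,w)$, which is where your ``more refined intersection-multiplicity estimate'' alternative is the right instinct. Tracking degrees in \eqref{eq-rational-gpoly} and \eqref{eq-rational-gstar} (the Chebyshev factor $U_{k-\ell-1}$ has degree $|k-\ell|-1$ and fixed parity) shows that every monomial $z^aw^b$ occurring in $H$ or $H^*$ satisfies $2a+b\ge 2n-2$, and that only even powers of $w$ occur. Hence $H(z,w)=\hat H(z,w^2)$ and $H^*(z,w)=\hat H^*(z,w^2)$ with $\hat H,\hat H^*\in\mathbb C[[z,v]]$ vanishing to order at least $n-1$ at the origin; since $\mathbb C[[z,w]]$ is free of rank $2$ over $\mathbb C[[z,v]]$ via $v=w^2$, one gets $I_{(1,0,0)}(G,G^*)=2\,I_0(\hat H,\hat H^*)\ge 2(n-1)^2$ (the case of a common local factor is excluded because it would descend to a common factor of $g$ and $g^*$, the line $w=0$ not being a component). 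Equivalently, all branches of both curves at $(1,0,0)$ have second-order contact with the common tangent $z=0$, which is the geometric content the bare tangent-cone data of Lemma~\ref{lemma-with-tangents} cannot see. With this local refinement the rest of your outline (B\'ezout for coprime $g,g^*$, Lemma~\ref{self-intersections}, division by two, and the ``unless'' clause) does complete the proof; without it, your write-up has a genuine gap at precisely the step the proposition hinges on.
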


\bibliographystyle{amsplain} 
\bibliography{references.bib}

\end{document}